\theoremstyle{plain}
\newtheorem{theorem}{Theorem}
\newtheorem{lemma}[theorem]{Lemma}
\newtheorem{proposition}[theorem]{Proposition}
\newtheorem{corollary}[theorem]{Corollary}
\theoremstyle{definition}
\newtheorem{definition}[theorem]{Definition}
\newtheorem{remark}[theorem]{Remark}
\numberwithin{theorem}{section}
\numberwithin{equation}{section}
\newcommand{\B}{\mathbb{B}}
\newcommand{\D}{\mathbb{D}}
\newcommand{\C}{\mathbb{C}}
\newcommand{\HH}{\mathbb{H}}
\newcommand{\Sbb}{\mathbb{S}}
\newcommand{\R}{\mathbb{R}}
\newcommand{\T}{\mathbb{T}}
\newcommand{\M}{\mathbb{M}}
\newcommand{\U}{\mathrm{U}}
\newcommand{\SO}{\mathrm{SO}}
\newcommand{\SU}{\mathrm{SU}}
\newcommand{\Spe}{\mathrm{Sp}}
\newcommand{\im}{\operatorname{Im}}
\newcommand{\re}{\operatorname{Re}}
\newcommand{\cB}{\mathcal{B}}
\newcommand{\cF}{\mathcal{F}}
\newcommand{\cM}{\mathcal{M}}
\begin{document}

\title[Geometric structures on the quaternionic unit ball]{Geometric structures on the quaternionic unit ball and slice regular M\"obius transformations}

\author{Raul Quiroga-Barranco}
\address{Centro de Investigaci\'on en Matem\'aticas \\
	Guanajuato, M\'exico}
\email{quiroga@cimat.mx}

\subjclass{Primary 30G35 53C26; Secondary 53C35 53D05}

\keywords{Slice regularity, quaternions, hyperbolic geometry, K\"ahler-like geometry}

\begin{abstract}
	Building from ideas of hypercomplex analysis on the quaternionic unit ball, we introduce Hermitian, Riemannian and K\"ahler-like structures on the latter. These are built from the so-called regular M\"obius transformations. Such geometric structures are shown to be natural generalizations of those from the complex setup. Our structures can be considered as more natural, from the hypercomplex viewpoint, than the usual quaternionic hyperbolic geometry. Furthermore, our constructions provide solutions to problems not achieved by hyper-K\"ahler and quaternion-K\"ahler geometries when applied to the quaternionic unit ball. We prove that the Riemannian metric obtained in this work yields the same tensor previously computed by Arcozzi-Sarfatti. However, our approach is completely geometric as opposed to the function theoretic methods of Arcozzi-Sarfatti.
\end{abstract}

\maketitle

\section{Introduction}\label{sec:intro}
The hyperbolic spaces lie at the core of geometry and provide some of the most fundamental objects. Among them, the lower dimensional ones are particularly interesting. The $2$-dimensional real hyperbolic space yield the $1$-dimensional complex hyperbolic space, and the $4$-dimensional real hyperbolic space coincides with the $1$-dimensional quaternionic hyperbolic space. It is well known that these occurrences are a consequence of some algebraic facts. They come from the properties of the complex numbers $\C$ and the quaternionic numbers $\HH$. They are also related to the local isomorphisms of Lie groups $\SO(2,1) \simeq \SU(1,1)$ and $\SO(4,1) \simeq \Spe(1,1)$.

The fact that the $2$-dimensional real hyperbolic geometry is essentially the $1$-dimensional complex hyperbolic geometry is very much related to the holomorphic function theory associated to the complex plane. For example, the holomorphic isometries of the latter are precisely the orientation-preserving isometries of the former. The use of holomorphic functions also bring into play the notion of a K\"ahler form and the corresponding symplectic geometry.

One might expect that the coincidence between the $4$-dimensional real and $1$-dimensional quaternionic hyperbolic geometries can also be related to a suitable function theory supported by the quaternions $\HH$. However, it is well known that a naive straightforward generalization of holomorphicity from $\C$ to $\HH$ does not yield the desired results, mainly due to the non-commutativity of $\HH$. Nevertheless, it has also been shown that the introduction of appropriate notions of regularity or ``hyperholomorphicity'' produce very useful function theories on $\HH$. In this work we will focus on the notion of slice or Cullen regularity deeply studied in \cite{GentiliStruppa2007} (see also \cite{ColomboSabadiniStruppaFunctionalBook,GentiliStoppatoStruppa2ndEd}). Besides allowing to generalize many of the classical complex theorems, slice regularity has a natural characterization: on unit balls centered at the origin, slice regular functions are precisely those with a power series expansion with right coefficients. Furthermore, the current literature shows that a mature and very well developed so-called hypercomplex analysis is readily available through the use of slice regularity and their extensions. As an example, we refer to \cite{ColomboSabadiniEtAlSliceMonogenic,ColomboSabadiniStruppaFunctionalBook} for corresponding extensions in the case of Clifford algebras. It is also important to mention \cite{BisiStoppatoSchwarz}, where results for self-maps of the complex unit disc are extended to the quaternionic unit ball. This kind of developments are at the heart of this work.

On the other hand, the isometries of the $1$-dimensional quaternionic hyperbolic geometry are given by suitable linear fractional transformations of $\HH$, which turn out to be, in general, non-regular. Related to this fact, it was proved in \cite{BisiGentiliGeometryHUnitDisc} that the geometry on the quaternionic unit ball is not isometric to the Kobayashi geometry on the complex $2$-dimensional unit ball in $\C^2$. In other words, there is a marked incompatibility between the quaternionic hyperbolic geometry with $2$-dimensional complex geometries as well. 

Nevertheless, using the natural so-called regular product, \cite{StoppatoMobius} developed the notion of regular linear fractional transformations, also referred as regular M\"obius transformations. These are the natural ones to consider from the viewpoint of hypercomplex analysis. Hence, on the unit ball $\B$ of $\HH$ centered at the origin we have two sorts of M\"obius transformations, regular and non-regular, while $\B$ carries the metric realizing both the $4$-dimensional real and $1$-dimensional quaternionic hyperbolic geometries. The non-regular M\"obius transformations preserve such hyperbolic geometry (see \cite{BisiGentiliMobius,CaoParkerWang,ChenGreenberg}) but does not relate properly to hypercomplex analysis. The regular M\"obius transformations do relate nicely to the latter but, as shown in \cite{BisiStoppatoMobius}, regular M\"obius transformations do not preserve the hyperbolic geometry. We thus arrive to the seemingly impossibility of studying a hypercomplex hyperbolic function theory. However, \cite{BisiStoppatoMobius} proposed the problem of a further study of regular M\"obius transformations and the geometry of $\B$. As a matter of fact, it would be interesting to study as well the Clifford algebra case considered in \cite{ColomboKraussharSabadiniSymmetries}. However, our focus will lie in the quaternionic unit ball.

We note that \cite{ArcozziSarfatti} has already introduced a Riemannian metric that relates to regular functions. The corresponding geometry was constructed by considering a quaternionic Hardy space, its corresponding reproducing kernel and a pseudo-hyperbolic distance; the latter mirrors the complex case. Finally, it is proved in \cite{ArcozziSarfatti} that such distance yields indeed a Riemannian metric. Furthermore, several non-trivial and interesting geometric properties for this geometry are obtained. On the other hand, the problem of building symplectic and/or Hermitian geometries for $\B$ are not considered in \cite{ArcozziSarfatti}.

The goal of this work is to present a solution to the problems formulated so far on the analysis and geometry of $\B$. We introduce in section~\ref{sec:SliceGeometryOnB} what we call slice geometric structures on $\B$, which include Hermitian, Riemannian and K\"ahler-like geometries of a truly quaternionic nature (see Definition~\ref{def:HermRiemSympRegularOnB}). These are constructed out of regular M\"obius transformations in a similar way as the corresponding structures on the unit disk on the complex plane are obtained from (complex holomorphic) M\"obius transformations. Our slice geometric structures are computed explicitly and some fundamental properties are obtained in section~\ref{sec:PropertiesGeometryOnB}, see Theorems~\ref{thm:SGeomStruct}, \ref{thm:SliceRiemannian} and \ref{thm:SliceKahler}, thus verifying that they satisfy their claimed nature.

It is important to note some relevant facts on our methods. We prove in Theorem~\ref{thm:GeometryMobiusNonRegular} that the non-regular M\"obius transformations leave invariant (essentially) only the $1$-dimensional quaternionic hyperbolic metric, which we denote by $\widehat{G}$, and that there are no Hermitian or K\"ahler-like structures on $\B$ invariant under such non-regular maps. This is one of the reasons for our approach: to consider regular M\"obius transformations and replace the metric $\widehat{G}$ with some other geometric structures related to slice regularity. We have effectively used hypercomplex analysis as the guiding light to determine the most adequate geometry on $\B$.

We believe our approach leads to very reasonable alternative structures on $\B$. For example, Theorem~\ref{thm:SliceRiemannian} shows that our slice Riemannian metric $G$ can be seen as a perturbation of the hyperbolic Riemannian metric $\widehat{G}$. We have also shown in Corollary~\ref{cor:g-on-DI} and Theorem~\ref{thm:SliceKahler} that both our slice Riemannian and slice K\"ahler structures restrict to their usual classical complex counterparts on the slices of $\B$ through the origin, which are unit disks in corresponding complex planes. 

Another important feature appears in our development: the Riemannian metric from \cite{ArcozziSarfatti} and our slice Riemannian metric turn out to be the same exact tensor, as it is proved in Theorem~\ref{thm:G=ArcozziSarfattiwidetildeG}. As noted above, the results from \cite{ArcozziSarfatti} make strong use of function space theoretic methods (see section~\ref{sec:PropertiesGeometryOnB} for some details). Our approach is almost exclusively based on transformations that mirror the group theoretic methods of the complex case. For the quaternionic unit ball $\B$ the regular M\"obius transformations is not a group, but we bypass the difficulties involved by taking advantage of the known fundamental properties of such transformations (see~\cite{ColomboSabadiniStruppaFunctionalBook,GentiliStoppatoStruppa2ndEd}). 

The fact that the Riemannian metrics constructed in \cite{ArcozziSarfatti} and in this work are the same reveals a fundamental property: both the function space theoretic and the geometric/group-like methods yield the same Riemannian geometry on $\B$. One can argue in favor of the latter method since it turns out to be more straightforward computation-wise. Another contribution of this work is the construction of a K\"ahler-like structure on $\B$ nicely related to the Riemannian metric that we have discussed.

On the other hand, an important feature of hypercomplex analysis of slice regular functions are the so-called representation formulas: every regular function can be determined from its values on a single slice. We prove in Corollaries~\ref{cor:RiemRepFormula} and \ref{cor:HermKahlerRepFormulas} that our three slice geometric structures, Riemannian, Hermitian and K\"ahler, satisfy similar representation formulas, i.e.~they can be recovered from a single slice. Furthermore, our formulas can be seen as invariant, in the Riemannian case, and equivariant, in the Hermitian and K\"ahler cases, identities with respect to the action by conjugation of the group of unit quaternions. The latter is by itself an interesting set of facts from the viewpoint of Lie theory.

We recall that differential geometry has already provided an approach to the quaternionic setup: hyper-K\"ahler and quaternion-K\"ahler manifolds. We discuss in Remark~\ref{rmk:ComparisonHyperKahler} the relationship between those notions and our slice regular approach. As noted there, our methods are better fit to both geometry and hypercomplex analysis on $\B$. In particular, we observe that $\B$, with the usual hyperbolic metric $\widehat{G}$, is not hyper-K\"ahler and, although it is quaternion-K\"ahler, it does not provide globally defined $2$-forms. Our approach does yield both a K\"ahler-like structure and $2$-forms associated to a Riemannian metric $G$ nicely adapted to hypercomplex analysis.

As for the distribution of this work, sections~\ref{sec:sliceregular} and \ref{sec:Mobius} introduce our two main tools: slice regularity and M\"obius transformations, both non-regular and regular. The brief section~\ref{sec:HermitianOnD} recalls the construction of the geometric structures on the complex unit disk. As described above, sections~\ref{sec:SliceGeometryOnB} and \ref{sec:PropertiesGeometryOnB} contain the main constructions and results.

The author would like to thank Cinzia Bisi and Caterina Stoppato for their input, which allowed to improve this work. The input from the anonymous reviewers have also provided very useful information that resulted on a much better version of this work.

\section{Slice regular functions}\label{sec:sliceregular}
We will denote from now on by $\HH$ the division real algebra of quaternions and by $\B$ its unit ball centered at the origin. As usual $\re$ and $\im$ will denote the real part and imaginary part functions defined on $\HH$. In particular, $\im(\HH)$ denotes the space of purely imaginary quaternions so that its subset $\Sbb$, consisting of the unitary purely imaginary quaternions, is the $2$-dimensional sphere of imaginary units of $\HH$. For every $I \in \Sbb$, we will denote by $\C_I$ the complex plane in $\HH$ whose elements are of the form $x + yI$, where $x, y \in \R$. Correspondingly, the slice of $\B$ determined by a given $I \in \Sbb$ will be denoted by $\D_I = \B \cap \C_I$, which is the unit disk in $\C_I$. We will also consider the complex plane $\C$ in the classical sense and denote by $\D$ its unit disk centered at the origin. Note that, in this work and for the case of $\C$ (not considered as contained in $\HH$), the real and imaginary parts will have their classical meaning as real numbers. This is important to keep in mind since we will have the occasion to deal with some classical complex geometric objects.

The fundamental notion of slice regularity as first formulated in \cite{GentiliStruppa2007} (see also \cite{ColomboSabadiniStruppaFunctionalBook,GentiliStoppatoStruppa2ndEd}) is introduced in the following definition. 

\begin{definition}\label{def:sliceregular}
	A function $f : \B \rightarrow \HH$ is called slice regular if the equation
	\[
		\frac{1}{2}\bigg(\frac{\partial}{\partial x} +
				I\frac{\partial}{\partial y}\bigg)f(x + yI) = 0
	\]
	is satisfied for every $I \in \Sbb$ and $x,y \in \R$ such that $x + yI \in \B$.
\end{definition}

The theory of slice regular functions on $\B$, and furthermore on suitable domains of $\HH$, has been deeply developed. We will use well known facts from this theory and refer to \cite{GentiliStoppatoStruppa2ndEd} for further details. However, we will have the occasion to provide precise references when this helps to clarify our discussion.

Every slice regular function $f : \B \rightarrow \HH$ admits a power series expansion
\[
	f(q) = \sum_{n=0}^\infty q^n a_n
\]
with coefficients in $\HH$ that converges uniformly on compact sets. Conversely, every such convergent power series yields a slice regular function on $\B$ (see~\cite[section~1.1]{GentiliStoppatoStruppa2ndEd}). This allows us to introduce the following constructions for slice regular functions on $\B$. We refer to \cite{ColomboSabadiniStruppaFunctionalBook,GentiliStoppatoStruppa2ndEd} for further details and the proofs of our claims below.

\begin{definition}\label{def:*-product}
	For $f,g : \B \rightarrow \HH$ slice regular functions admitting power series expansions
	\[
		f(q) = \sum_{n=0}^\infty q^n a_n, \quad
		g(q) = \sum_{n=0}^\infty q^n b_n
	\]
	the (slice regular) $*$-product is the function $f*g : \B \rightarrow \HH$ given by
	\[
		f*g(q) = \sum_{n=0}^\infty q^n \sum_{k+l=n}a_k b_l,
	\]
	for every $q \in \B$.
\end{definition}

It is a well known fact that the $*$-product of slice regular functions is slice regular, and also that the space of all such functions becomes a real algebra with the $*$-product. Furthermore, one can define reciprocals with respect to the $*$-product as follows. 

\begin{definition}\label{def:regularreciprocal}
	Let $f : \B \rightarrow \HH$ be a slice regular function with power series expansion given by
	\[
		f(q) = \sum_{n=0}^\infty q^n a_n.
	\]
	The regular conjugate $f^c : \B \rightarrow \HH$ is the (slice regular) function given by
	\[
		f^c(q) = \sum_{n=0}^\infty q^n \overline{a}_n.
	\]
	The symmetrization $f^s$ of $f$ is the (slice regular) function defined as $f^s = f^c*f = f*f^c$. The regular reciprocal of $f$ is the function $f^{-*} : \B \setminus Z_{f^s} \rightarrow \HH$ defined as
	\[
		f^{-*} = \frac{1}{f^s} f^c,
	\]
	where $Z_{f^s}$ is the zero set of $f^s$.
\end{definition} 

It is a fundamental fact that all the functions introduced in Definition~\ref{def:regularreciprocal} are slice regular.

\section{Linear fractional transformations}\label{sec:Mobius}
We will recall the two main notions of linear fractional transformations associated to the unit ball $\B$. This are built from the Lie groups that we now proceed to define.

Let us denote
\[
	I_{1,1} = 
	\begin{pmatrix*}[r]
		1 & 0 \\
		0 & -1
	\end{pmatrix*}
\]
the matrix that yields pseudo-Hermitian inner products of signature $(1,1)$ on both $\C^2$ and $\HH^2$. The corresponding isometry groups for those inner products~are
\begin{align*}
	\U(1,1) &= \{ A \in M_2(\C) \mid A^* I_{1,1} A = I_{1,1} \},   \\
	\Spe(1,1) &= \{ A \in M_2(\HH) \mid A^* I_{1,1} A = I_{1,1} \},
\end{align*}
called the pseudo-unitary, complex and quaternionic, respectively, Lie groups of signature $(1,1)$. We also have the Lie groups $\T$ and $\Spe(1)$ which consist of the complex and quaternionic elements, respectively, with norm $1$. In particular, both are the unit spheres centered at the origin in their corresponding division real algebras. Finally, we will denote by $\T \times \T$ and $\Spe(1) \times \Spe(1)$ the groups of $2 \times 2$ diagonal matrices with entries in $\T$ and $\Spe(1)$ in the diagonal, respectively. It is straightforward to check that $\T \times \T \subset \U(1,1)$ and $\Spe(1) \times \Spe(1) \subset \Spe(1,1)$. Moreover, both are maximal compact subgroups (see \cite{Helgason}).

We will consider the right $\Spe(1,1)$-action on $\B$ given by
\[
	F_A(q) = q\cdot A = (qc + d)^{-1}(qa + b)
\]
for every $q \in \B$ and where $A \in \Spe(1,1)$ has the expression
\[
	A = 
	\begin{pmatrix}
		a & c \\
		b & d
	\end{pmatrix}.
\]
The map $F_A$ will be called the \textbf{M\"obius transformation} of $\B$ associated to the matrix $A \in \Spe(1,1)$. The set of all such M\"obius transformations will be denoted by $\M(\B)$. We recall that $\M(\B)$ is a Lie group isomorphic to $\Spe(1,1)/\{\pm I_2\}$ (see \cite{BisiGentiliMobius}).

\begin{remark}\label{rmk:MobiusB}
	It is a known fact that the action of $\Spe(1,1)$ on $\B$ defined above realizes all linear fractional transformations, with right coefficients, that map $\B$ onto itself. We refer to \cite{BisiGentiliMobius,CaoParkerWang,ChenGreenberg} for the proof of this claim. We observe that in most of the previous works dealing with quaternionic M\"obius transformations, with some exceptions as, for example, \cite{BisiGentiliMobius,BisiStoppatoMobius}, one considers coefficients on the left which lead to a corresponding left action. We have chosen to use coefficients and corresponding action on the right to better resemble slice regular maps. Left multiplication by a constant quaternion is not slice regular but right multiplication is.
\end{remark}

On the other hand, \cite{StoppatoMobius} introduced the regular counterpart of linear fractional transformations (see also \cite{GentiliStoppatoStruppa2ndEd}). For every $A \in \Spe(1,1)$ as above the \textbf{regular M\"obius transformation} of $\B$ associated to $A$ is the (regular) map $\cF_A : \B \rightarrow \B$ given by
\[
	\cF_A(q) = (qc + d)^{-*}*(qa + b).
\]
Every regular M\"obius transformation is a homeomorphism of $\B$. However, as a consequence of the properties of the regular product, the assignment $A \mapsto \cF_A$ is not a group homomorphism. Nevertheless, these are the transformations that better fit the analysis of slice regular functions. We will denote by $\cM(\B)$ the set of all regular M\"obius transformations. In particular, $\cM(\B)$ is not a group for the usual composition of functions. However, it is interesting to note that $\cM(\B)$ is the orbit of the identity map on $\B$ for a suitable $\Spe(1,1)$-action: see~\cite[Corollary~9.17]{GentiliStoppatoStruppa2ndEd} for further details. In other words, $\cM(\B)$ does have a close relationship with a Lie group, namely $\Spe(1,1)$.

Whenever we need to clearly distinguish the transformations of $\cM(\cB)$ with those obtained from the $\Spe(1,1)$-action considered above, we will refer to the latter as non-regular M\"obius transformations.

We recall the following alternative description of the elements in $\cM(\B)$.

\begin{proposition}[\cite{StoppatoMobius}]\label{prop:StoppatoMobius}
	A map $\cF : \B \rightarrow \B$ is a regular M\"obius transformation if and only if there exist $a \in \B$ and $u \in \Spe(1)$ such that
	\[
		\cF(q) = (q\overline{a} - 1)^{-*}*(q - a)u.
	\]
	In such case, $a$ and $u$ are uniquely associated to $\cF$.
\end{proposition}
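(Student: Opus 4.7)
My plan is to prove both directions, with the sufficiency handled by an explicit construction and the necessity handled by reading off $(a,u)$ from the two leading Taylor coefficients of $\cF$.

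For sufficiency ($\Leftarrow$), I would exhibit an explicit matrix in $\Spe(1,1)$ realizing the given expression. Set
\[
A_0 = \begin{pmatrix} u & \bar a \\ -au & -1 \end{pmatrix},
\qquad
A = \frac{1}{\sqrt{1 - |a|^2}}\, A_0.
\]
A direct block computation gives $A_0^* I_{1,1} A_0 = (1 - |a|^2) I_{1,1}$, so $A \in \Spe(1,1)$. Because for any real $\lambda>0$ and any slice regular $h$ one has $(\lambda h)^{-*} = \lambda^{-1} h^{-*}$, and because real scalars commute through the $*$-product, the expression $\cF_B(q) = (qc+d)^{-*}*(qa'+b')$ is invariant under rescaling $B$ by a positive real. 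Thus $\cF_A = \cF_{A_0}$, and unwinding the definition of $\cF_{A_0}$ gives exactly $(q\bar a - 1)^{-*} * (q-a)u$.

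For necessity and uniqueness ($\Rightarrow$), my plan is to first compute the Taylor series of any candidate in the prescribed form. The key observation is $(1 - q\bar a)^{-*}(q) = \sum_{n\geq 0} q^n \bar a^n$, verified by checking $(1-q\bar a) * \sum q^n \bar a^n = 1$ at the level of power series; hence $(q\bar a - 1)^{-*}(q) = -\sum q^n \bar a^n$. Combining this with $(q-a)u = qu - au$ and using $\bar a^n a = |a|^2 \bar a^{n-1}$ (since $a$ and $\bar a$ commute), a straightforward $*$-product computation yields
\[
(q\bar a - 1)^{-*} * (q-a)u \;=\; au \;+\; (|a|^2 - 1)\sum_{n\geq 1} q^n \bar a^{n-1} u.
\]
In particular the value and derivative at $0$ are $au$ and $(|a|^2 - 1)u$ respectively, with $(|a|^2-1) \in \R_{<0}$. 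This forces
\[
u = -\frac{\cF'(0)}{|\cF'(0)|}, \qquad a = \cF(0)\, u^{-1},
\]
proving uniqueness. For existence, given $\cF = \cF_A \in \cM(\B)$, I would take $(a,u)$ defined by these formulas and form the candidate $\cG(q) = (q\bar a - 1)^{-*} * (q-a)u$. Choosing $I \in \Sbb$ with $a \in \C_I$, I would invoke the identity principle for slice regular functions: it suffices to show $\cG|_{\D_I} = \cF_A|_{\D_I}$. Both restrictions are self-maps of $\D_I$ that agree in value and derivative at $0$; if both are classical complex M\"obius self-maps of $\D_I$, equality on $\D_I$ (and hence on $\B$) follows from the classical complex theory.

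The main obstacle I anticipate is the last step, namely verifying that $\cF_A|_{\D_I}$ is a genuine complex M\"obius self-map of the disk $\D_I$. The difficulty is that the $*$-product and the regular reciprocal are defined through the quaternionic power series, and their restrictions to a slice do not in general coincide with the pointwise complex operations unless the coefficients of the factors lie in $\C_I$. To overcome this, I would either choose $I$ adapted to the entries of $A$ so that the relevant coefficients lie in $\C_I$, or bypass the identity principle entirely by computing the full Taylor expansion of $\cF_A$ directly from the entries of $A$ (using the constraints $|a|=|d|$, $|b|=|c|$, $\bar a c = \bar b d$ coming from $A \in \Spe(1,1)$) and matching the resulting pattern $c_n = (|a|^2-1)\bar a^{n-1}u$ for $n \geq 1$ against the series computed above.
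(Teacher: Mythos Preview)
The paper does not give its own proof of this proposition: it is quoted verbatim as a result of Stoppato and only the subsequent sign convention is commented on. So there is nothing in the paper to compare your argument against, and I will simply assess your proposal on its own merits.

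Your sufficiency argument is correct: the matrix $A_0$ you wrote down satisfies $A_0^* I_{1,1} A_0 = (1-|a|^2) I_{1,1}$, the rescaled matrix lies in $\Spe(1,1)$, and the invariance of $\cF_A$ under positive real rescaling of $A$ gives the claimed expression. Your uniqueness argument is also correct: the Taylor computation $(q\bar a - 1)^{-*}*(q-a)u = au + (|a|^2-1)\sum_{n\ge 1} q^n \bar a^{\,n-1} u$ is right, and reading off the first two coefficients recovers $(a,u)$.

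The genuine gap is in existence, and your first proposed workaround does not succeed. For $\cF_A|_{\D_I}$ to be a classical complex M\"obius self-map of $\D_I$ you would need all four entries of $A$ to lie in a single slice $\C_I$; for a generic $A\in\Spe(1,1)$ no such $I$ exists, and with only $c,d\in\C_I$ the restriction $\cF_A|_{\D_I}$ need not even map $\D_I$ into itself. Your second workaround, computing the full Taylor series of $\cF_A$ and matching the pattern $c_n=(|a|^2-1)\bar a^{\,n-1}u$, is the viable route. A clean way to organize it is to use the closed form
\[
\cF_A(q)=\bigl(q^2|c|^2+2q\,\re(\bar d c)+|d|^2\bigr)^{-1}\bigl(q^2\bar c\,a'+q(\bar c\,b'+\bar d\,a')+\bar d\,b'\bigr),
\]
obtained from $f^{-*}=(f^s)^{-1}f^c$ with $f(q)=qc+d$, together with the $\Spe(1,1)$ constraints $|a'|=|d|$, $|b'|=|c|$, $\bar a' c=\bar b' d$; from these one can solve explicitly for $a=d^{-1}b'$ and $u$, and then check coefficient by coefficient. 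As written, however, your proposal stops short of carrying this out.
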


Note that we have applied a change of sign in such expression with respect to the one found in \cite{GentiliStoppatoStruppa2ndEd,StoppatoMobius}. This will be convenient for our computations.

We establish in the next result some properties for the previous expression of regular M\"obius transformations that will be useful for our purposes. Nevertheless, this result is interesting by itself.

\begin{proposition}\label{prop:FaRegularMobius}
	For every $a \in \B$, let us denote by $\cF_a$ the regular M\"obius transformation defined by $\cF_a(q) = (q\overline{a} - 1)^{-*}*(q - a)$, for every $q \in \B$. Then, $\cF_a$ satisfies
	\[
		\cF_a(q) = (q^2|a|^2 - 2q\re(a) + 1)^{-1}
				(q^2 a - q(a^2 + 1) + a),
	\]
	for every $q \in \B$. Furthermore, the following holds for every $a \in \B$.
	\begin{enumerate}
		\item $\cF_a(a) = 0$, $\cF_a(0) = a$ and $\cF_0(q) = -q$, for all $q \in \B$.
		\item $\cF_a$ is a diffeomorphism whose directional derivative at $a$ in the direction of $\alpha$ is given by
			\[
				\dif\:(\cF_a)_a(\alpha) 
					= \frac{(1-a^2)^{-1}(a \alpha a -\alpha)}%
						{1-|a|^2},
			\]
			for every $\alpha \in \HH$.
	\end{enumerate}
\end{proposition}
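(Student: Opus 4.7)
The plan is to compute $\cF_a$ as an explicit ratio and then read off both items from the resulting formula. The main computational step, occupying the first half of the argument, is the derivation of the closed form. I would begin by applying the constructions of Section~\ref{sec:sliceregular} to the slice regular polynomial $f(q) = q\overline{a} - 1$. Its regular conjugate is $f^c(q) = qa - 1$, and a short calculation of the $*$-product from Definition~\ref{def:*-product} shows
\[
f^s(q) = q^2|a|^2 - 2q\,\re(a) + 1,
\]
which has real coefficients. The general identity $f^{-*} * g = (f^c * g)/f^s$, which holds whenever $f^s$ has real power series coefficients (so that $f^s$ commutes with every slice regular function under $*$), then reduces the problem to computing a single further $*$-product. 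Evaluating $(qa - 1)*(q - a)$ coefficient by coefficient yields $q^2 a - q(a^2+1) + a$. Since the denominator $D(q) := q^2|a|^2 - 2q\,\re(a) + 1$ has real coefficients and therefore commutes with everything, the claimed formula
\[
\cF_a(q) = (q^2|a|^2 - 2q\,\re(a) + 1)^{-1}(q^2 a - q(a^2+1) + a)
\]
follows.

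The three identities in item~(1) are then immediate from substitution into this formula: at $q = a$ the numerator is $a^3 - a(a^2+1) + a = 0$; at $q = 0$ the numerator is $a$ and the denominator is $1$; and when $a = 0$ the denominator equals $1$ and the numerator reduces to $-q$. For the smoothness and bijectivity claim in item~(2), $D$ is a polynomial in $q$ with real coefficients whose roots in any slice $\C_I$ are complex conjugates of modulus $1/|a| > 1$; thus $D$ does not vanish on $\B$, and $\cF_a$ is smooth there. Combined with the fact recalled in Section~\ref{sec:Mobius} that every regular M\"obius transformation is a homeomorphism of $\B$, this yields the diffeomorphism claim.

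For the derivative, write $\cF_a = D^{-1}N$ with $N(q) = q^2 a - q(a^2+1) + a$, so that $N(a) = 0$. The usual product rule for $D^{-1}N$ then collapses at $q = a$ to $\dif(\cF_a)_a(\alpha) = D(a)^{-1}\dif N_a(\alpha)$, and a direct differentiation gives $\dif N_a(\alpha) = (a\alpha + \alpha a)a - \alpha(a^2+1) = a\alpha a - \alpha$. It remains to identify $D(a)$; using $2\re(a) = a + \overline{a}$ and $|a|^2 = a\overline{a}$, one obtains
\[
D(a) = a^3\overline{a} - a^2 - a\overline{a} + 1 = (a^2 - 1)(|a|^2 - 1) = (1 - a^2)(1 - |a|^2).
\]
Since $1 - |a|^2$ is a real scalar, this gives $D(a)^{-1} = (1-a^2)^{-1}/(1-|a|^2)$, and multiplying against $a\alpha a - \alpha$ yields exactly the formula in the statement. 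I expect the main obstacle to be keeping track of the non-commutative ordering during the $*$-product expansions and in the factorization of $D(a)$, since the key cancellations rely on moving quaternionic coefficients past powers of $q$ in the correct order; once those orderings are resolved, everything else is mechanical.
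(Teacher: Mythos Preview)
Your proposal follows essentially the same route as the paper: set $f(q)=q\overline{a}-1$, compute $f^s$ and $f^c*g$ explicitly to obtain the closed formula, read off item~(1) by substitution, and obtain the derivative via the product rule for $D^{-1}N$ at $q=a$, where the vanishing $N(a)=0$ kills the term coming from $\dif(D^{-1})$. The factorization $D(a)=(1-a^2)(1-|a|^2)$ and the simplification $\dif N_a(\alpha)=a\alpha a-\alpha$ are exactly those in the paper.

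There is one small gap in your diffeomorphism argument: ``smooth on $\B$'' together with ``homeomorphism of $\B$'' does not by itself yield a diffeomorphism (the cube map on $\R$ is the standard counterexample). The paper handles this by citing Corollary~8.24 of \cite{GentiliStoppatoStruppa2ndEd}, which says an injective slice regular function is already a local diffeomorphism; combined with the homeomorphism statement (Theorem~9.12 there) this gives a global diffeomorphism. Alternatively, since you have the explicit rational formula, you could verify directly that $\dif(\cF_a)_q$ is invertible at every $q\in\B$ and invoke the inverse function theorem.
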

\begin{proof}
	Let us fix $a \in \B$ and consider the functions on $\B$ defined by the expressions
	\[
		f(q) = q \overline{a} - 1, \quad
		g(q) = q - a,
	\]
	so that we have
	\[
		\cF_a = \frac{1}{f^s} f^c * g.
	\]
	From the definitions we can compute
	\begin{align*}
		f^s(q) &= (q\overline{a} -1) * (qa - 1) 
			= q^2 |a|^2 -2q\re(a) + 1 \\
		\big(f^c * g\big)(q) &= q^2 a - q(a^2 + 1) + a,  
	\end{align*}
	for every $q \in \B$.
	
	On the other hand, we note that either $f \equiv -1$ (for $a = 0$), or we have $f(q) = 0$ if and only if $q = (\overline{a})^{-1}$. Since $|(\overline{a})^{-1}| > 1$, we conclude that $f$ does not vanish in $\B$, and so \cite[Proposition~3.10]{GentiliStoppatoStruppa2ndEd} implies that $f^s$ does not vanish in $\B$ either. Hence we obtain the formula for $\cF_a$ in the statement and also conclude that its expression is well-defined: i.e.~its denominator does not vanish in $\B$. 
	
	From the previous remarks, we can evaluate to conclude that $\cF_a(a) = 0$ and $\cF_a(0) = a$. For $a = 0$, it is straightforward to see that $\cF_0(q) = -q$, for all $q \in \B$. In particular we obtain (1).
	
	For the first claim in (2) we use Corollary~8.24 and Theorem~9.12 from~\cite{GentiliStoppatoStruppa2ndEd}. The latter implies that $\cF_a$ is a homeomorphism, in particular injective, and so the former implies that $\cF_a$ is a local diffeomorphism. It follows that $\cF_a$ is a diffeomorphism.
	
	To obtain the formula in (2) we compute as follows
	\begin{align*}
		\dif\:(\cF_a)_a(\alpha) 
			=&\; \frac{\dif}{\dif t}\sVert[2]_{t=0}
					\cF_a(a + t\alpha) \\
			=&\; \frac{\dif}{\dif t}\sVert[2]_{t=0}
				\bigg(\big((a + t \alpha)^2|a|^2 - 2(a + t \alpha)\re(a) 
						+ 1\big)^{-1} \cdot \\
				&\quad\cdot
					\big((a + t \alpha)^2 a 
						- (a + t \alpha)(a^2 + 1) + a\big)\bigg)  \\
			=&\; \bigg(\frac{\dif}{\dif t}\sVert[2]_{t=0}
				\big((a + t \alpha)^2|a|^2 - 2(a + t \alpha)\re(a) 
						+ 1\big)^{-1} \bigg) \cdot \\
				&\quad\cdot \big(a^3 - a(a^2 + 1) + a\big) \\
				&+ \big(a^2 |a|^2 - 2a\re(a) + 1\big)^{-1} \cdot \\
				&\quad\cdot \frac{\dif}{\dif t}\sVert[2]_{t=0}
						\big((a + t \alpha)^2 a 
							- (a + t \alpha)(a^2 + 1) + a\big) \\
			=&\; 0  
				+ \big(a^2 |a|^2 - a(a + \overline{a}) + 1\big)^{-1} 
					\big(
					(a\alpha + \alpha a)a - \alpha(a^2 + 1)
					\big) \\
			=&\; \big(a^2 |a|^2 - a^2 - |a|^2 + 1\big)^{-1}
					(a\alpha a - \alpha)  \\
			=&\; \big((1 - a^2) (1 - |a|^2)\big)^{-1}
					(a\alpha a - \alpha),
	\end{align*}
	which yields the stated expression. We used the chain rule in the first identity. In the computations that follow, we used Leibniz rule for the product (in $\HH$) of $\HH$-valued functions, which is justified from the bilinearity of such product.
\end{proof}

\section{Hermitian structure on $\D$}\label{sec:HermitianOnD}
In this section we recall the group theoretic construction of the Hermitian, Riemannian and K\"ahler structures on the complex unit disk $\D \subset \C$. This will provide the motivation to understand how to deal with the corresponding problem for $\B \subset \HH$.

Consider the right action of $\U(1,1)$ on the unit disk $\D$ by M\"obius transformations which is given by
\[
	z \cdot A = (zc + d)^{-1}(za + b)
\]
where $z \in \D$ and 
\[
	A =
	\begin{pmatrix}
		a & c \\
		b & d
	\end{pmatrix}
\]
belongs to $\U(1,1)$. The isotropy subgroup of this action at the origin is $\T \times \T$ which acts on $\D$~by
\[
	z\cdot(a,d) = \overline{d}za = za\overline{d},
\]
for $z \in \D$ and $a,d \in \T$. The canonical Hermitian inner product of $\C$ is given~by
\begin{equation}\label{eq:HermitianFormCAt0}
	(\alpha, \beta) \mapsto \alpha \overline{\beta},
\end{equation}
and we think of it as a Hermitian form at the tangent space of $\D$ at the origin $0$. The $\T \times \T$-action on $\D \subset \C$ is $\R$-linear and so the differential at $0$ of the action of elements in $\T \times \T$ is given by the same linear expression. Hence, the next computation shows that the Hermitian form chosen is $\T \times \T$-invariant
\begin{equation}\label{eq:TT-invariance}
	(\alpha a\overline{d}) \overline{(\beta a\overline{d})} 
		= \alpha \overline{\beta} |a|^2 |d|^2
		= \alpha \overline{\beta},
\end{equation}
which holds for $a, d \in \T$ and by the commutativity of $\C$. The next step is to translate the Hermitian form at $0$ to other points using M\"obius transformations. For a given $z \in \D$ we consider a M\"obius transformation $\varphi$ of $\D$ such that $\varphi(z) = 0$, and we define the Hermitian form $h_z$ at the tangent space of $\D$ at $z$ by
\begin{equation}\label{eq:HermitianComplex}
	h_z(\alpha,\beta) = \dif\varphi_z(\alpha)\overline{\dif\varphi_z(\beta)},
\end{equation}
for every $\alpha, \beta \in \C$. If $\psi$ is some other M\"obius transformation of $\D$ mapping $\psi(z) = 0$, then the transformation $\varphi \circ \psi^{-1}$ fixes $0$ and so we conclude the existence of $(a,d) \in \T \times \T$ such that $\varphi \circ \psi^{-1} = R_{(a,d)}$, where $R_{(a,d)}$ stands for the right action of the element $(a,d) \in \T \times \T$ over $\D$. Hence, we can rewrite this as
\begin{equation}\label{eq:phipsiForC}
	\varphi = R_{(a,d)} \circ \psi.
\end{equation}
Then, equation~\eqref{eq:TT-invariance} implies that
\[
	\dif\varphi_z(\alpha)\overline{\dif\varphi_z(\beta)} 
		= \dif R_{(a,d)}(\dif\psi_z(\alpha))\overline{\dif R_{(a,d)}(\dif\psi_z(\beta))}
		= \dif\psi_z(\alpha)\overline{\dif\psi_z(\beta)},
\]
for every $\alpha, \beta \in \C$. It follows that the Hermitian form $h_z$ considered in equation~\eqref{eq:HermitianComplex} is well defined for all $z \in \D$. A straightforward computation now yields the following well known result.

\begin{proposition}\label{prop:GeometryComplex}
	Let $h$ be the Hermitian form defined by equation~\eqref{eq:HermitianComplex}. Then, $h$ is a Hermitian metric on $\D$ with explicit expression given by
	\[
		h_z(\alpha, \beta) = \frac{\alpha \overline{\beta}}{(1 - |z|^2)^2},
	\]
	for every $z \in \D$ and $\alpha, \beta \in \C$. Its real and imaginary parts $g$ and $\omega$, respectively, yield the hyperbolic metric on $\D$ and its associated K\"ahler form. All three tensor forms $h$, $g$ and $\omega$ are $\U(1,1)$-invariant, i.e.~invariant under the M\"obius transformations of $\D$.
\end{proposition}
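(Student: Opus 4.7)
The plan is to unfold the definition of $h_z$ from equation~\eqref{eq:HermitianComplex} against a convenient explicit Möbius transformation sending $z$ to $0$, and then recognize the resulting tensor as the standard Poincaré data on $\D$. The first step is to choose, for a fixed $z \in \D$, the automorphism
\[
    \varphi(w) = \frac{w - z}{1 - \overline{z}w},
\]
which clearly satisfies $\varphi(z) = 0$. A direct computation via the quotient rule gives
\[
    \varphi'(w) = \frac{1 - |z|^2}{(1 - \overline{z}w)^2},
\]
so that $\varphi'(z) = 1/(1 - |z|^2)$. Since $\varphi$ is holomorphic, its real differential at $z$ acts on $\alpha \in \C$ by multiplication by $\varphi'(z)$, and substituting into \eqref{eq:HermitianComplex} yields the claimed formula
\[
    h_z(\alpha, \beta) = \varphi'(z)\alpha \cdot \overline{\varphi'(z)\beta}
        = \frac{\alpha\overline{\beta}}{(1-|z|^2)^2}.
\]
The well-definedness argument that precedes equation~\eqref{eq:phipsiForC} in the paper guarantees that the answer does not depend on the choice of $\varphi$, so this computation suffices.

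Next I would verify the $\U(1,1)$-invariance. For $A \in \U(1,1)$ let $F_A$ denote the associated Möbius transformation, and fix $z \in \D$ together with $\varphi$ as above. The composition $\varphi \circ F_A$ is a Möbius transformation sending $z \cdot A^{-1}$ to $0$, so by the well-definedness of $h$ we may compute $h_{z \cdot A^{-1}}$ using this map. Applying the chain rule,
\[
    h_{z \cdot A^{-1}}(\alpha,\beta)
        = \dif(\varphi \circ F_A)_{z\cdot A^{-1}}(\alpha)
            \overline{\dif(\varphi \circ F_A)_{z\cdot A^{-1}}(\beta)}
        = h_z\bigl(\dif (F_A)_{z\cdot A^{-1}}(\alpha), \dif (F_A)_{z \cdot A^{-1}}(\beta)\bigr),
\]
which is exactly the statement of $\U(1,1)$-invariance of $h$. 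Invariance of the real and imaginary parts $g$ and $\omega$ is then an immediate consequence, since taking $\R$-linear components commutes with pullback.

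Finally, I would identify $g = \re(h)$ and $\omega = \im(h)$ with the classical hyperbolic metric and its Kähler form. Writing $\alpha = \alpha_1 + i\alpha_2$ and $\beta = \beta_1 + i\beta_2$, a short expansion gives
\[
    g_z(\alpha,\beta) = \frac{\alpha_1\beta_1 + \alpha_2\beta_2}{(1-|z|^2)^2}, \qquad
    \omega_z(\alpha,\beta) = \frac{\alpha_2\beta_1 - \alpha_1\beta_2}{(1-|z|^2)^2},
\]
which are, respectively, the Poincaré metric of curvature $-4$ on $\D$ and its associated Kähler form (the imaginary part is antisymmetric, closed as a $2$-form, and equals $g_z(J\cdot,\cdot)$ for the standard complex structure $J$, confirming the Kähler identity). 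The $\U(1,1)$-invariance already established then matches the classical fact that the Poincaré metric is preserved by all disk automorphisms.

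The main obstacle is essentially bookkeeping rather than conceptual: one must be careful that the single computation at $z$ using the specific $\varphi$ above genuinely produces a well-defined tensor, which is why the argument around equation~\eqref{eq:phipsiForC} was set up in advance. Once this is in hand the proof reduces to the derivative computation of $\varphi$ at $z$, an application of the chain rule for invariance, and the separation of $h$ into its real and imaginary parts.
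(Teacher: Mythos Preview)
Your proof is correct and follows exactly the approach the paper indicates: compute $\dif\varphi_z$ for a specific M\"obius transformation sending $z$ to $0$, and deduce $\U(1,1)$-invariance from the well-definedness of $h$ together with the chain rule. The paper's own proof is only a two-line sketch of precisely this route, so your version simply supplies the details it omits.
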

\begin{proof}
	The expression follows by an explicit computation of $\dif\varphi_z$ for any choice of $\varphi$ as in the previous discussion. The $\U(1,1)$-invariance can be proved easily from its definition and the fact that $\U(1,1)$ acts on $\D$.
\end{proof}

\section{Slice geometric structures on $\B$}\label{sec:SliceGeometryOnB}
Following the classical complex case described in section~\ref{sec:HermitianOnD}, we consider the $\R$-bilinear form
\begin{equation}\label{eq:HermitianFormQuatAt0}
	H_0(\alpha, \beta) = \alpha \overline{\beta},
\end{equation}
now defined for $\alpha, \beta \in \HH$. We observe that $H_0$ is $\HH$-Hermitian for the left vector space structure of $\HH$ over itself. We will call these sort of forms left $\HH$-Hermitian. Let us also denote
\begin{equation}\label{eq:Riem2FormQuatAt0}
	G_0(\alpha,\beta) = \re(\alpha\overline{\beta}), \quad
	\Omega_0(\alpha,\beta) = \im(\alpha\overline{\beta}),
\end{equation}
which are defined for $\alpha, \beta \in \HH$ as well. In this case, $G_0$ is the canonical positive definite inner product of $\HH \simeq \R^4$ and $\Omega_0$ is an $\im(\HH)$-valued anti-symmetric $\R$-bilinear form. We will think of these three forms, $H_0$, $G_0$ and $\Omega_0$, as defined on the tangent space to $\B$ at the origin $0$. Note that equation~\eqref{eq:HermitianFormQuatAt0} corresponds to the classical  complex case given by equation~\eqref{eq:HermitianFormCAt0}. Similarly, the forms~\eqref{eq:Riem2FormQuatAt0} correspond to the real part and $i$-times the imaginary part of~\eqref{eq:HermitianFormCAt0}. To clarify the point, it is important to note that $\Omega_0$ is vector-valued, while the imaginary part of \eqref{eq:HermitianFormCAt0} is real-valued.

We now consider the $\Spe(1,1)$-action on $\B$ by M\"obius transformations discussed in section~\ref{sec:Mobius}. As stated in the next result, if we look for $\Spe(1,1)$-invariant geometric structures, then we obtain a Riemannian metric but we do not get a Hermitian metric, nor a $2$-form.

\begin{theorem}\label{thm:GeometryMobiusNonRegular}
	Let $H_0, G_0, \Omega_0$ be the forms given by equations~\eqref{eq:HermitianFormQuatAt0} and \eqref{eq:Riem2FormQuatAt0}. Then, the following properties hold.
	\begin{enumerate}
		\item There exists a unique, up to a constant factor, $\Spe(1,1)$-invariant Riemannian metric on $\B$. In particular, at $0$, all of them are a constant multiple of $G_0$. Furthermore, the $\Spe(1,1)$-invariant metric whose value at $0$ is $G_0$ has the expression
		\[
			\widehat{G}_q = \frac{\re(\alpha \overline{\beta})}%
					{(1 - |q|^2)^2},
		\]
		where $q \in \B$ and $\alpha, \beta \in \HH$.
		\item There do not exist $\HH$-valued left Hermitian positive definite tensor forms on $\B$ that are also $\Spe(1,1)$-invariant.
		\item There do not exist $\im(\HH)$-valued $2$-forms on $\B$ whose value at $0$ is $\Omega_0$ that are also $\Spe(1,1)$-invariant.
	\end{enumerate}
\end{theorem}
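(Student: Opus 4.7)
The overall strategy is to exploit that $\Spe(1,1)$ acts transitively on $\B$ with isotropy subgroup at the origin equal to the compact group $\Spe(1)\times\Spe(1)$ of diagonal matrices; by standard homogeneous space theory, $\Spe(1,1)$-invariant tensor fields on $\B$ correspond bijectively to tensors on $T_0\B=\HH$ that are invariant under the isotropy action. A direct computation shows that $\mathrm{diag}(a,d)\in\Spe(1)\times\Spe(1)$ acts on $\B$ by $q\mapsto \bar d q a$, whence its differential at $0$ is the $\R$-linear map $\alpha\mapsto \bar d\alpha a$ on $\HH$. I would also record that the differential of a general $F_A$ at a point $q$ satisfying $F_A(q)=0$ is, by direct computation using $qa+b=0$, given by
\[
\dif (F_A)_q(\alpha)=(qc+d)^{-1}\alpha a.
\]

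For part~(1), the homomorphism $\Spe(1)\times\Spe(1)\to \SO(\HH)\cong\SO(4)$ sending $(a,d)$ to $\alpha\mapsto \bar d\alpha a$ is the standard double cover. Hence the space of invariant symmetric $\R$-bilinear forms on $\HH$ is one-dimensional and spanned by $G_0(\alpha,\beta)=\re(\alpha\bar\beta)$, giving both uniqueness and existence of the invariant Riemannian metric up to positive scalar. For the explicit formula I would take
\[
A_q=\frac{1}{\sqrt{1-|q|^2}}\begin{pmatrix} 1 & -\bar q\\ -q & 1\end{pmatrix},
\]
check that $A_q\in\Spe(1,1)$ and $F_{A_q}(q)=0$, and then apply the derivative formula together with the identity $\re(u\xi\bar u)=|u|^2\re(\xi)$ for $u\in\HH$ to pull back $G_0$ and obtain $\widehat G_q(\alpha,\beta)=\re(\alpha\bar\beta)/(1-|q|^2)^2$.

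Parts~(2) and (3) are obstructions at the isotropy level. For~(2), since $\HH$ is a one-dimensional left $\HH$-module, every left $\HH$-Hermitian form on it has the shape $H(\alpha,\beta)=r\,\alpha\bar\beta$ with $r=H(1,1)\in\R$, and positive definiteness forces $r>0$. Isotropy invariance then reduces to $\bar d(\alpha\bar\beta)d=\alpha\bar\beta$ for all $d\in\Spe(1)$, which would force $\alpha\bar\beta$ to lie in the center $\R$ of $\HH$; picking $\alpha=1$, $\beta=i$ yields $-i\notin\R$, a contradiction. Part~(3) is the same obstruction: any $\Spe(1,1)$-invariant $\im(\HH)$-valued $2$-form restricts to an isotropy-invariant form at $0$, and a direct computation gives
\[
\Omega_0(\bar d\alpha a,\bar d\beta a)=\im(\bar d\alpha\bar\beta d)=\bar d\,\im(\alpha\bar\beta)\,d,
\]
so invariance would force $\im(\alpha\bar\beta)$ to commute with every unit quaternion, again contradicted by $\alpha=1$, $\beta=i$.

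The main obstacle I expect is purely bookkeeping: keeping the right-action conventions straight, placing conjugates correctly in $\HH$, and verifying that the chosen $A_q$ actually belongs to $\Spe(1,1)$. Conceptually each step is elementary once the isotropy action is identified, and the whole argument is a quaternionic reprise of the complex case in Proposition~\ref{prop:GeometryComplex}, with the crucial difference that the isotropy acts on $T_0\B$ by a two-sided conjugation-like formula rather than by a scalar.
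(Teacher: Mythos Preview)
Your proposal is correct and follows essentially the same strategy as the paper: reduce everything to the isotropy action of $\Spe(1)\times\Spe(1)$ at the origin and test invariance there. The only difference is packaging---where the paper invokes Helgason's symmetric-space machinery for existence/uniqueness in (1), cites \cite{BisiGentiliMobius} for the explicit formula for $\widehat G$, and appeals to Harvey's Basis Theorem for the classification of left $\HH$-Hermitian forms in (2), you carry out each of these steps by hand (the $\Spe(1)\times\Spe(1)\to\SO(4)$ double cover for irreducibility, the explicit matrix $A_q$ for the metric, and the one-line computation $H(\alpha,\beta)=H(1,1)\,\alpha\bar\beta$), which makes your version more self-contained but otherwise identical in substance.
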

\begin{proof}
	It is well known (see \cite{BisiGentiliMobius,Helgason}) that the right $\Spe(1,1)$-action on $\B$ is transitive. Furthermore, if we consider (as before) $\Spe(1) \times \Spe(1)$ as a diagonally embedded subgroup of $\Spe(1,1)$, then such subgroup is precisely the stabilizer of $0$ for the $\Spe(1,1)$-action. This can be used to obtain a realization of $\B$ as a homogeneous space that can be described as follows. Our main reference is \cite{Helgason} which considers left actions as opposed to our right actions. However, the standard correspondence between these two types of actions can be used to apply the results in \cite{Helgason} to our setup.
	
	We recall that the coset space
	\[
		(\Spe(1) \times \Spe(1))\backslash \Spe(1,1)
	\]
	admits a unique manifold structure such that the $\Spe(1,1)$-action on the right is smooth. This is a consequence of \cite[Theorem~4.2,~Chapter~II]{Helgason}. If we consider the map given by
	\begin{align*}
		(\Spe(1) \times \Spe(1))\backslash \Spe(1,1) 
			&\rightarrow \B \\
		(\Spe(1) \times \Spe(1))A &\mapsto F_A(0) = 0 \cdot A,
	\end{align*}
	then the previous remarks on the $\Spe(1,1)$-action show that this is a well-defined bijection. Furthermore, Theorem~3.2 and Proposition~4.3 in Chapter~II of \cite{Helgason} imply that such map is in fact a diffeomorphism, thus providing a homogeneous space realization of $\B$. We also note that such diffeomorphism is $\Spe(1,1)$-equivariant for the right actions on both domain and target.

	On the other hand, by \cite[Table~V,~page~518]{Helgason}, the pair 
	\[
		(\Spe(1,1), \Spe(1) \times \Spe(1))
	\] 
	is a so-called Riemannian symmetric pair (see~Definition in \cite[page~209]{Helgason}) of type CII. Hence, \cite[Proposition~3.4,~Chapter~IV]{Helgason} implies the existence of a $\Spe(1,1)$-invariant Riemannian metric on the quotient space 
	\[
		(\Spe(1) \times \Spe(1))\backslash \Spe(1,1),
	\] 
	which thus translates into a $\Spe(1,1)$-invariant Riemannian metric on $\B$ as a consequence of the equivariance satisfied by the diffeomorphism between these two manifolds. This proves the existence of the Riemannian metric as stated in (1). To establish uniqueness, we recall that $\Spe(1,1)$ is a simple Lie group (see~Definition in~\cite[page~131]{Helgason}) and that the $\Spe(1) \times \Spe(1)$-action on the tangent space at $0$ in $\B$ is irreducible; this follows from the fact that the symmetric pair $(\Spe(1,1),\Spe(1) \times \Spe(1))$ is listed in \cite[Table~V,~page~518]{Helgason}. Hence, by the remarks at the end of page~255 in \cite{Helgason} the Riemannian metric in $\B$ is unique, up to a constant positive factor. Also note that, for every $u, v \in \Spe(1)$ and $\alpha, \beta \in \HH$ we have
	\[
		\re(\overline{u}\alpha v \overline{\overline{u}\beta v}) 
			= \re(\overline{u}\alpha \overline{\beta} u)
			= \re(\alpha \overline{\beta}),
	\]
	thus proving that $G_0$ is $\Spe(1) \times \Spe(1)$-invariant, and so $G_0$ is unique, up to a constant factor, by the irreducibility mentioned above. To complete the proof of (1) we note that the expression for $\widehat{G}$
	has been computed in \cite{BisiGentiliMobius}: for further details see \cite[Theorem~1.9]{BisiGentiliMobius} and the remarks before this result.
	
	Next, we claim that $\Omega_0$ is not $\Spe(1) \times \Spe(1)$-invariant, and so that the same holds for $H_0$. To see this, let us assume that $u \in \Spe(1)$ satisfies $\im(\overline{u}\alpha \overline{\overline{u} \beta}) = \im(\alpha \overline{\beta})$, for all $\alpha, \beta \in \HH$. Then, by choosing $\beta = 1$ we conclude that $\im(\overline{u}\alpha u) = \im(\alpha)$, which is easily seen to imply that $\alpha u = u\alpha$, for all $\alpha \in \HH$, because $|u| = 1$. Hence, we conclude that $u = \pm 1$, thus proving our claim.
	
	Let us assume the existence of an $\Spe(1,1)$-invariant $\HH$-valued left Hermitian positive definite tensor form on $\B$ and let $T_0$ be its value at $0$. In particular, $T_0$ is an $\Spe(1) \times \Spe(1)$-invariant $\HH$-valued left Hermitian positive definite form on $\HH$. By The Basis Theorem 2.46 from \cite{HarveySpinors}, applied to the $\HH$-Hermitian symmetric type (see~\cite[page~21]{HarveySpinors}), there exists $c > 0$ such that $H_0 = c T_0$. This implies that $H_0$ is $\Spe(1) \times \Spe(1)$-invariant, which is a contradiction. This proves~(2).
	
	Finally, the non-invariance of $\Omega_0$ for the $\Spe(1) \times \Spe(1)$-action and arguments similar to those used above yield (3).
\end{proof}

\begin{remark}\label{rmk:GeometryMobiusNonRegular}
	Theorem~\ref{thm:GeometryMobiusNonRegular} can be seen as proving that non-regular M\"obius transformations do not provide invariant Hermitian and symplectic geometric structures on $\B$ of a quaternionic nature. By comparing the proof of Theorem~\ref{thm:GeometryMobiusNonRegular} with the constructions from section~\ref{sec:HermitianOnD} one can readily see that this fact is a direct consequence of the non-commutativity of $\HH$.
	
	On the other hand, Theorem~\ref{thm:GeometryMobiusNonRegular}(1) is a restatement of the well known existence and uniqueness of an $\Spe(1,1)$-invariant metric on $\B$ (see \cite{BisiGentiliMobius,Helgason}). In fact, the Riemannian metric $\widehat{G}$ above is indeed the one studied in \cite{BisiGentiliMobius}.
\end{remark}

In order to obtain Hermitian and symplectic-like geometric structures on $\B$ of a true quaternionic nature we set our sights on $\cM(\B)$, the regular M\"obius transformations. Following the ideas from section~\ref{sec:HermitianOnD} we first note that, in contrast to the arguments found in the proof of Theorem~\ref{thm:GeometryMobiusNonRegular}, the tensor $H_0$, and so $G_0$ and $\Omega_0$, is invariant under the regular M\"obius transformations that fix the origin. We describe the latter, along with some properties, in the next result, whose first part follows from \cite[Lemma~9.19]{GentiliStoppatoStruppa2ndEd}.

\begin{proposition}\label{prop:RegularIsotropy0}
	Let us denote by $\cM(\B)_0$ the set of regular M\"obius transformations of $\B$ that fix $0$. Then, the following properties hold.
	\begin{enumerate}
		\item The right action of $\Spe(1)$ on $\B$ given by
		\[
			(q,u) \mapsto qu = R_u(q),
		\]
		yields maps of $\B$ that realize $\cM(\B)_0$. In other words, $\cM(\B)_0$ consists of the set of transformations $R_u$, for $u \in \Spe(1)$. In particular, $\cM(\B)_0$ is a group isomorphic to $\Spe(1)$ acting on the right on $\B$ and fixing $0$.
		\item The differential at $0$ of the $\cM(\B)_0$-action on $\B$ leaves invariant the form $H_0$. In particular, the same holds for $G_0$ and $\Omega_0$.
	\end{enumerate}
\end{proposition}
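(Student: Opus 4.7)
The plan is to handle (1) using the characterization of regular M\"obius transformations from Proposition~\ref{prop:StoppatoMobius} combined with Proposition~\ref{prop:FaRegularMobius}, and then to handle (2) by an elementary direct computation using the fact that the $\Spe(1)$-action in (1) is simply right multiplication and hence $\R$-linear.

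For part~(1), I would start with an arbitrary $\cF \in \cM(\B)_0$. By Proposition~\ref{prop:StoppatoMobius} there exist unique $a \in \B$ and $u \in \Spe(1)$ with
\[
   \cF(q) = (q\overline{a} - 1)^{-*} * (q - a)\, u = \cF_a(q)\, u,
\]
where $\cF_a$ is the transformation of Proposition~\ref{prop:FaRegularMobius}. Evaluating at $q = 0$ and using $\cF_a(0) = a$ gives $\cF(0) = a u$. Since $u \in \Spe(1)$ has $|u| = 1$, the condition $\cF(0) = 0$ forces $a = 0$. Then Proposition~\ref{prop:FaRegularMobius}(1) yields $\cF_0(q) = -q$, so $\cF(q) = -q\, u = q(-u) = R_{-u}(q)$. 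As $-u$ ranges over $\Spe(1)$ when $u$ does, we conclude $\cM(\B)_0 = \{R_u : u \in \Spe(1)\}$. The map $u \mapsto R_u$ is clearly a group homomorphism from $\Spe(1)$ onto $\cM(\B)_0$ (with $\Spe(1)$ acting on the right), and it is injective since $R_u(1) = u$.

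For part~(2), since each $R_u$ is $\R$-linear on $\HH$, its differential at $0$ is itself, namely the map $\alpha \mapsto \alpha u$ on the tangent space $T_0 \B \cong \HH$. Using $u \overline{u} = |u|^2 = 1$ I compute
\[
   H_0(\alpha u, \beta u) = (\alpha u)\overline{(\beta u)} = \alpha u \overline{u}\, \overline{\beta} = \alpha \overline{\beta} = H_0(\alpha, \beta).
\]
Taking real and imaginary parts of this identity gives immediately the invariance of $G_0$ and $\Omega_0$.

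There is no real obstacle here: the whole argument reduces to the identification of the isotropy at $0$ (essentially cited from~\cite{GentiliStoppatoStruppa2ndEd}) and a one-line algebraic check that right multiplication by a unit quaternion preserves the form $\alpha \overline{\beta}$. The only subtle point worth stating carefully is the distinction between $\cF_a$ and the full transformation $\cF_a(q)\, u$ of Proposition~\ref{prop:StoppatoMobius}, so that the evaluation $\cF(0) = au$ is unambiguous.
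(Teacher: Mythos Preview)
Your proof is correct and follows essentially the same approach as the paper: for part~(2) your computation is identical to the paper's, and for part~(1) the paper simply cites \cite[Lemma~9.19]{GentiliStoppatoStruppa2ndEd}, whereas you supply a short self-contained argument via Propositions~\ref{prop:StoppatoMobius} and~\ref{prop:FaRegularMobius}, which is a perfectly good (and arguably more transparent) substitute. One small slip: you justify injectivity of $u \mapsto R_u$ by writing $R_u(1) = u$, but $1 \notin \B$; just evaluate at any nonzero $q \in \B$ (e.g.\ $q = \tfrac{1}{2}$) instead.
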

\begin{proof}
	As noted above, (1) is a consequence of \cite[Lemma~9.19]{GentiliStoppatoStruppa2ndEd}.
	
	Since the $\cM_0(\B)$-action on $\B$ is $\R$-linear, its differential has the same expression. Hence, the trivial identity
	\[
		(\alpha u) \overline{(\beta u)} = \alpha \overline{\beta},
	\]
	which holds for every $\alpha, \beta \in \HH$ and $u \in \Spe(1)$, yields the proof of (2).
\end{proof}

\begin{remark}\label{rmk:RegularIsotropy0}
	The $\Spe(1)$-invariance of $H_0$ is the first step to build geometric structures on $\B$ out of regular M\"obius transformations. It is important to note that Proposition~\ref{prop:RegularIsotropy0} has provided a sort of isotropy at $0$ for such transformations, which turns out to be an actual group. However, for the composition of maps, $\cM(\B)$ is not a group and so at this point the full group theoretic argument used in section~\ref{sec:HermitianOnD} for the classical complex case breaks down. Nevertheless, the next result provides the analog of equation~\eqref{eq:phipsiForC} for regular M\"obius transformations. It is worthwhile to mention that the techniques from \cite{BisiStoppatoSchwarz} are similar in spirit, since such work considers regular self-maps of $\B$ that fix the origin.
\end{remark}

\begin{lemma}\label{lem:cF1cF2-u}
	Let $\cF_1$ and $\cF_2$ be regular M\"obius transformations of $\B$. If there is some $q \in \B$ such that $\cF_1(q) = \cF_2(q) = 0$, then there exists $u \in \Spe(1)$ for which we have $\cF_1 = R_u \circ \cF_2$.
\end{lemma}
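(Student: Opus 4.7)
The plan is to combine the explicit parametrization of regular Möbius transformations in Proposition~\ref{prop:StoppatoMobius} with the uniqueness of zeros of the maps $\cF_a$ established in Proposition~\ref{prop:FaRegularMobius}. First I would invoke Proposition~\ref{prop:StoppatoMobius} to write, for each $i \in \{1,2\}$,
\[
	\cF_i(p) = (p\overline{a_i} - 1)^{-*}*(p - a_i)u_i = \cF_{a_i}(p)\,u_i,
\]
for uniquely determined $a_i \in \B$ and $u_i \in \Spe(1)$; the second equality holds because right $*$-multiplication by a constant quaternion coincides with ordinary pointwise right multiplication.

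Next I would identify the common zero to show $a_1 = a_2$. Evaluating the identity above at $p = q$, the hypothesis $\cF_i(q) = 0$ together with $|u_i| = 1$ forces $\cF_{a_i}(q) = 0$. By Proposition~\ref{prop:FaRegularMobius}(1) we already have $\cF_{a_i}(a_i) = 0$, and Proposition~\ref{prop:FaRegularMobius}(2) asserts that $\cF_{a_i}$ is a diffeomorphism of $\B$, in particular injective. Hence $q = a_i$ for each $i$, and therefore $a_1 = a_2$; call this common value $a$.

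With $a := a_1 = a_2$ fixed, I can write $\cF_1(p) = \cF_a(p)\,u_1$ and $\cF_2(p) = \cF_a(p)\,u_2$ for every $p \in \B$. Setting $u := u_2^{-1} u_1$, which belongs to $\Spe(1)$ since $\Spe(1)$ is a group, one computes
\[
	R_u \circ \cF_2(p) = \cF_2(p)\,u = \cF_a(p)\,u_2\,u_2^{-1} u_1 = \cF_a(p)\,u_1 = \cF_1(p),
\]
which is the desired identity $\cF_1 = R_u \circ \cF_2$.

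The only delicate step is the middle one: showing that a zero of a regular Möbius transformation determines the base point $a$ uniquely. This is where Proposition~\ref{prop:FaRegularMobius}(2) is essential, since we need injectivity of $\cF_a$ on all of $\B$ (not just local invertibility near $a$). Everything else is routine bookkeeping with the $(a,u)$-parametrization; note that $\cM(\B)$ not being a group plays no role here, because the composition $R_u \circ \cF_2$ involves an element of the genuine group $\cM(\B)_0$ from Proposition~\ref{prop:RegularIsotropy0}.
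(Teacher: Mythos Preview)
Your proof is correct and follows essentially the same route as the paper's own argument: both invoke Proposition~\ref{prop:StoppatoMobius} to write $\cF_i = R_{u_i}\circ\cF_{a_i}$, deduce $a_1 = a_2 = q$ from the vanishing $\cF_{a_i}(q)=0$, and then take $u = u_2^{-1}u_1$. The only difference is that you are slightly more explicit about why the zero of $\cF_{a_i}$ is unique, citing the diffeomorphism property in Proposition~\ref{prop:FaRegularMobius}(2) in addition to the evaluation in~(1), whereas the paper cites only~(1); your extra care here is warranted, since~(1) alone only gives $\cF_{a_i}(a_i)=0$ and injectivity is indeed needed to conclude $a_i=q$.
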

\begin{proof}
	By Proposition~\ref{prop:StoppatoMobius}, there exist $a_1, a_2 \in \B$ and $u_1, u_2 \in \Spe(1)$ such that $\cF_i = R_{u_i} \circ \cF_{a_i}$,	for $i = 1, 2$, where $\cF_{a_i}$ are given as in Proposition~\ref{prop:FaRegularMobius}. In particular, we have $\cF_{a_1}(q) = \cF_{a_2}(q) = 0$, and so Proposition~\ref{prop:FaRegularMobius}(1) implies that $a_1 = a_2 = q$. Hence, $\cF_{a_1} = \cF_{a_2}$ and we conclude 
	\[
		\cF_1 = R_u \circ \cF_2, 
	\]
	for $u = u_2^{-1} u_1 \in \Spe(1)$.
\end{proof}

The next definition introduces geometric structures on $\B$ using regular M\"obius transformations. This is the analog of equation~\eqref{eq:HermitianComplex} considered for the classical complex case.

\begin{definition}\label{def:HermRiemSympRegularOnB}
	For every $q \in \B$, let us define the forms $H_q$, $G_q$ and $\Omega_q$ by
	\begin{align*}
		H_q(\alpha, \beta) &= \dif \cF_q(\alpha) 
				\overline{\dif \cF_q(\beta)}   \\
		G_q(\alpha, \beta) &= \re\big(\dif \cF_q(\alpha) 
			\overline{\dif \cF_q(\beta)}\big)   \\
		\Omega_q(\alpha, \beta) &= \im\big(\dif \cF_q(\alpha) 
			\overline{\dif \cF_q(\beta)}\big)
	\end{align*}
	for every $\alpha, \beta \in \HH$, where $\cF \in \cM(\B)$ satisfies $\cF(q) = 0$. The tensors $H$, $G$ and $\Omega$ thus obtained on $\B$ will be called the slice Hermitian metric, the slice Riemannian metric and the slice $2$-form of $\B$, respectively. We may also abbreviate their names to s-Hermitian, s-Riemannian and s-$2$-form. All three of them will be called the slice geometric or s-geometric structures of $\B$.
\end{definition}

With the constructions considered so far, it is now easy to prove that $H$, $G$ and $\Omega$ are well defined. We will also establish the first properties of the slice geometric structures of $\B$ in the next result.

\begin{theorem}\label{thm:SGeomStruct}
	The tensors $H$, $G$ and $\Omega$ from Definition~\ref{def:HermRiemSympRegularOnB} are well defined and smooth. The slice Hermitian metric $H$ has the following explicit expression
	\[
		H_q(\alpha, \beta) = 
		\frac{(1-q^2)^{-1}(\alpha - q\alpha q)%
			(\overline{\beta - q\beta q})%
			(1-\overline{q}^2)^{-1}}{(1 - |q|^2)^2},
	\]
	for every $q \in \B$ and $\alpha, \beta \in \HH$. Furthermore, the following properties are satisfied.
	\begin{enumerate}
		\item $H$ is $\R$-bilinear.
		\item $H$ is Hermitian symmetric in the sense that $H_q(\alpha, \beta) = \overline{H_q(\beta,\alpha)}$, for every $q \in \B$ and $\alpha, \beta \in \HH$.
		\item $H$ is positive definite, i.e.~for every $q \in \B$ and $\alpha \in \HH$ we have $H_q(\alpha,\alpha) \geq 0$, with equality only for $\alpha = 0$.
	\end{enumerate}
\end{theorem}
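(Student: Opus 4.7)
The plan is to split the argument into three tasks: first verify that $H$, $G$, $\Omega$ are well defined and smooth, then derive the explicit formula by direct substitution of the differential computed in Proposition~\ref{prop:FaRegularMobius}, and finally verify properties (1)--(3) by routine manipulations with quaternionic conjugation.

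For well-definedness, I would take two regular M\"obius transformations $\cF_1, \cF_2 \in \cM(\B)$ with $\cF_1(q) = \cF_2(q) = 0$. By Lemma~\ref{lem:cF1cF2-u} there is $u \in \Spe(1)$ with $\cF_1 = R_u \circ \cF_2$, so taking differentials at $q$ gives $\dif(\cF_1)_q(\alpha) = \dif(\cF_2)_q(\alpha)\, u$. Then
$\dif(\cF_1)_q(\alpha) \overline{\dif(\cF_1)_q(\beta)} = \dif(\cF_2)_q(\alpha)\, u\overline{u}\, \overline{\dif(\cF_2)_q(\beta)} = \dif(\cF_2)_q(\alpha)\overline{\dif(\cF_2)_q(\beta)}$
since $u\overline{u} = |u|^2 = 1$, so the value of $H_q$ does not depend on the choice of $\cF$; the same then holds for $G_q = \re H_q$ and $\Omega_q = \im H_q$. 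Smoothness follows by adopting the canonical choice $\cF = \cF_q$ from Proposition~\ref{prop:FaRegularMobius}: the map $(q, p) \mapsto \cF_q(p)$ is smooth in both arguments, hence the map $q \mapsto \dif(\cF_q)_q$ is smooth, and so are its quadratic combinations with conjugation.

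For the explicit formula, substitute $\dif(\cF_q)_q(\alpha) = \frac{(1-q^2)^{-1}(q\alpha q - \alpha)}{1-|q|^2}$ from Proposition~\ref{prop:FaRegularMobius}(2) into the definition of $H_q$. Applying the anti-multiplicativity identity $\overline{xy} = \overline{y}\,\overline{x}$ together with $\overline{1-q^2} = 1 - \overline{q}^2$ and $\overline{q\beta q} = \overline{q}\,\overline{\beta}\,\overline{q}$ transforms the conjugate factor into $(\overline{q}\,\overline{\beta}\,\overline{q} - \overline{\beta})(1-\overline{q}^2)^{-1}$. Observing that $(q\alpha q - \alpha) = -(\alpha - q\alpha q)$ and $(\overline{q}\,\overline{\beta}\,\overline{q} - \overline{\beta}) = -\overline{\beta - q\beta q}$, the two minus signs cancel and yield the stated formula.

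Properties (1)--(3) then drop out. $\R$-bilinearity in (1) follows from the $\R$-linearity of $\dif(\cF_q)_q$ composed with the $\R$-bilinear pairing $(x, y) \mapsto x\overline{y}$. For (2) the identity $\overline{x\overline{y}} = y\overline{x}$ applied with $x = \dif(\cF_q)_q(\beta)$ and $y = \dif(\cF_q)_q(\alpha)$ gives $\overline{H_q(\beta, \alpha)} = H_q(\alpha, \beta)$. For (3), $H_q(\alpha, \alpha) = |\dif(\cF_q)_q(\alpha)|^2 \geq 0$, and equality forces $\dif(\cF_q)_q(\alpha) = 0$; since $\cF_q$ is a diffeomorphism by Proposition~\ref{prop:FaRegularMobius}(2), its differential at $q$ is an $\R$-linear isomorphism, so $\alpha = 0$. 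No step presents a real obstacle, as the result essentially unpacks Definition~\ref{def:HermRiemSympRegularOnB} using the material of sections~\ref{sec:Mobius}--\ref{sec:SliceGeometryOnB}; the only point that demands care is the sign bookkeeping when taking conjugates in the derivation of the explicit formula, where the order reversal of quaternionic conjugation and the two sign flips must be tracked together.
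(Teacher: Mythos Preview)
Your proof is correct and follows essentially the same route as the paper: well-definedness via Lemma~\ref{lem:cF1cF2-u}, the explicit formula by substituting the differential from Proposition~\ref{prop:FaRegularMobius}(2), and properties (1)--(2) directly from the definition. The only minor deviation is in (3), where the paper argues from the explicit formula that $H_q(\alpha,\alpha)=0$ forces $\alpha=q\alpha q$ and then derives a contradiction via $|\alpha|=|q|^2|\alpha|$, whereas you invoke the diffeomorphism property of $\cF_q$ to conclude injectivity of its differential; both arguments are valid and equally short.
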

\begin{proof}
	For a given $q \in \B$, let $\cF_1, \cF_2 \in \cM(\B)$ be such $\cF_1(q) = \cF_2(q) = 0$. By Lemma~\ref{lem:cF1cF2-u}, there exists $u \in \Spe(1)$ such that $\cF_1 = R_u \circ \cF_2$. Since $R_u$ is linear it is equal to its differential and we conclude that for every $\alpha, \beta \in \HH$ we have
	\begin{align*}
		\dif\:(\cF_1)_q(\alpha) \overline{\dif\:(\cF_1)_q(\beta)}
		&= \big(\dif\:(\cF_2)_q(\alpha) u\big) 
			\overline{\big(\dif\:(\cF_2)_q(\beta) u\big)}  \\
		&= \dif\:(\cF_2)_q(\alpha) \overline{\dif\:(\cF_2)_q(\beta)}.
	\end{align*}
	This proves that all three tensors $H$, $G$ and $\Omega$ are well defined. Hence, the definition of $H$ clearly implies claims (1) and (2).
	
	For a given $q \in \B$, it is straightforward to compute the formula for $H_q$ by choosing $\cF_q$ as the transformation mapping $q$ to $0$ and using the expression for its differential at $q$ obtained in Proposition~\ref{prop:FaRegularMobius}(2). The formula for $H$ now implies the smoothness of all three tensors.
	
	Finally, to prove the positive definiteness of $H$ let us choose $q \in \B$ and $\alpha \in \HH$. Then, we clearly have
	\[
		H_q(\alpha,\alpha) 
			= \frac{|\alpha - q\alpha q|^2}{|1-q^2|^2 (1 - |q|^2)^2} \geq 0.
	\]
	If this expression vanishes, then we have $\alpha = q \alpha q$. For $q = 0$, this implies $\alpha = 0$. On the other hand, if both $q$ and $\alpha$ are non-zero, then we conclude that
	\[
		|\alpha| = |q \alpha q| = |q|^2 |\alpha| < |\alpha|.
	\]
	This contradiction proves that $\alpha = 0$ when $q \not= 0$ as well. This yields (3) and concludes the proof.
\end{proof}

\begin{remark}\label{rmk:SGeomStruct}
	We note that the $\HH$-valued tensor $H$ is not left $\HH$-Hermitian at every point. Such property of $H_0$ has been lost. However, the properties of $H_0$ have been replaced by the $\R$-bilinearity and Hermitian positivity obtained in (1) and (2) of Theorem~\ref{thm:SGeomStruct}. On the other hand, all three tensors $H$, $G$ and $\Omega$ can be considered as invariantly obtained from regular M\"obius transformations, as it follows immediately from their definition. Full invariance cannot be even considered since $\cM(\B)$ is not a group for the composition of maps. Nevertheless, we will see in the next section that some useful properties are satisfied by the slice geometric structures.
\end{remark}

\section{Properties of the slice geometric structures on $\B$}
\label{sec:PropertiesGeometryOnB}
The tensors $G$ and $\Omega$ can be given explicit expressions by simply taking the real and imaginary parts, respectively, of $H$. However, we can obtain simpler formulas. To achieve this, we introduce some notation.

For every $I \in \Sbb$, we consider the decomposition
\[
	\HH = \C_I \oplus \C_I J,
\]
where $J$ is any element of $\Sbb$ that anti-commutes with $I$. This decomposition is orthogonal for the canonical inner product of $\HH \simeq \R^4$. In particular, $\C_I J$ is the orthogonal complement for such inner product and so it does not depend on the choice of $J$. We will denote from now on by $\pi_I$ and $\pi_I^\perp$ the orthogonal projections from $\HH$ onto $\C_I$ and $(\C_I)^\perp = \C_I J$, respectively.

It follows from the proof of Theorem~\ref{thm:SGeomStruct} that $G$ is indeed a smooth Riemannian metric. The next result further provides for $G$ some useful simplified formulas. Recall that $\widehat{G}$ is the $\Spe(1,1)$-invariant metric on $\B$ from Theorem~\ref{thm:GeometryMobiusNonRegular}(1). Also, and as usual, $T_q \B$ will denote the tangent space to $\B$ at a point $q$; note that this space is naturally isomorphic (as a real vector space) to $\HH$.

\begin{theorem}\label{thm:SliceRiemannian}
	The slice Riemannian metric $G$ of $\B$ has the following expression
	\[
		G_q(\alpha,\beta) = 
			\frac{\re((\alpha - q \alpha q)%
				(\overline{\beta - q \beta q}))}%
				{|1 - q^2|^2(1 - |q|^2)^2},
	\]
	for every $q \in \B$ and $\alpha, \beta \in \HH$. Alternatively, with the notation considered above, we also have
	\begin{align*}
		G_q(\alpha,\beta) 
			&= \frac{\re(\alpha\overline{\beta})}{(1 - |q|^2)^2}
				+ 4 \frac{|\im(q)|^2 
					\re(\pi_I^\perp(\alpha) \pi_I^\perp(\beta))}%
					{|1 - q^2|^2(1 - |q|^2)^2} \\
			&= \widehat{G}_q(\alpha,\beta) 
				+ 4 \frac{|\im(q)|^2 
					\re(\pi_I^\perp(\alpha) \pi_I^\perp(\beta))}%
					{|1 - q^2|^2(1 - |q|^2)^2},
	\end{align*}
	for every $q \in \D_I$, where $I \in \Sbb$, and $\alpha, \beta \in \HH$. In particular, for every $\alpha \in T_q \B$, the length $\|\alpha\|_q$ of $\alpha$ with respect to the metric $G_q$ is given by
	\[
		\|\alpha\|_q^2 = G_q(\alpha,\alpha) 
			= \frac{|\alpha|^2}{(1 - |q|^2)^2}
				+ 4 \frac{|\im(q)|^2\re(\pi_I^\perp(\alpha)^2)}%
					{|1 - q^2|^2(1 - |q|^2)^2}
	\]
	for every $q \in \C_I$, where $I \in \Sbb$.
\end{theorem}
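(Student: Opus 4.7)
The plan is to derive all three formulas by simplifying the expression for $H_q$ given in Theorem~\ref{thm:SGeomStruct}. Since $G_q = \re H_q$ by Definition~\ref{def:HermRiemSympRegularOnB}, I would first apply the cyclic property of the real part to rewrite
\[
\re\big((1-q^2)^{-1} X (1-\overline{q}^2)^{-1}\big) = \re\big(X (1-\overline{q}^2)^{-1}(1-q^2)^{-1}\big),
\]
with $X = (\alpha - q\alpha q)\overline{(\beta - q\beta q)}$. Since $q$ and $\overline{q}$ both lie in the commutative subalgebra $\R[q]$, the factors $1-q^2$ and $1-\overline{q}^2$ commute, and their product is the real scalar $|1-q^2|^2$. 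Pulling this scalar out of the real part immediately yields the first displayed formula.

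For the second expression I would fix $q \in \D_I$ and exploit the orthogonal decomposition $\HH = \C_I \oplus \C_I J$. A direct check using $Jd = \overline{d}J$ for $d \in \C_I$ shows that $qaq = q^2 a$ for $a \in \C_I$ and $qaq = |q|^2 a$ for $a \in \C_I J$. Applying this to $\alpha = \pi_I(\alpha) + \pi_I^\perp(\alpha)$ yields the clean decomposition
\[
\alpha - q\alpha q = (1-q^2)\pi_I(\alpha) + (1-|q|^2)\pi_I^\perp(\alpha),
\]
and analogously for $\overline{\beta - q\beta q}$, after observing that $\overline{\pi_I^\perp(\beta)} = -\pi_I^\perp(\beta)$ since elements of $\C_I J$ are purely imaginary.

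I would then expand the product $(\alpha - q\alpha q)\overline{(\beta - q\beta q)}$ into four terms. The two cross terms land in $\C_I J$ and hence have vanishing real part; the $\C_I$-$\C_I$ term contributes $|1-q^2|^2\,\re(\pi_I(\alpha)\overline{\pi_I(\beta)})$, and the $\C_I J$-$\C_I J$ term contributes $-(1-|q|^2)^2\,\re(\pi_I^\perp(\alpha)\pi_I^\perp(\beta))$. Using the orthogonality identity
\[
\re(\alpha\overline{\beta}) = \re(\pi_I(\alpha)\overline{\pi_I(\beta)}) - \re(\pi_I^\perp(\alpha)\pi_I^\perp(\beta))
\]
to eliminate the $\pi_I$-part, together with the elementary algebraic identity $|1-q^2|^2 - (1-|q|^2)^2 = 4|\im(q)|^2$ (verified by writing $q = \re(q) + \im(q)I$ and expanding), I would obtain the stated second formula after dividing by $|1-q^2|^2(1-|q|^2)^2$. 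The reformulation in terms of $\widehat{G}_q$ is then Theorem~\ref{thm:GeometryMobiusNonRegular}(1), and the length formula is the specialization $\alpha = \beta$ combined with $\re(\alpha\overline{\alpha}) = |\alpha|^2$.

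The main obstacle is not conceptual but bookkeeping: carefully tracking the sign coming from $\overline{\pi_I^\perp(\beta)} = -\pi_I^\perp(\beta)$ and confirming that the two cross terms truly lie in $\C_I J$ so their real parts vanish. Once the decomposition $\alpha - q\alpha q = (1-q^2)\pi_I(\alpha) + (1-|q|^2)\pi_I^\perp(\alpha)$ is in hand, the remaining computations are routine algebra in the $(\C_I, J)$ presentation of $\HH$.
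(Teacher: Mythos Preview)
Your proof is correct. The derivation of the first formula is identical to the paper's, using the cyclic property of $\re$ to collapse the outer factors into the real scalar $|1-q^2|^{-2}$.

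For the second formula your route differs from the paper's in a useful way. The paper expands $(\alpha - q\alpha q)\overline{(\beta - q\beta q)}$ into four raw terms, repeatedly applies $\re(ab)=\re(ba)$, and only then writes $\alpha=\alpha_1+\alpha_2 J$ to compute $\re(q\alpha q\overline{\beta})$ and $\re(\overline{q}\alpha\overline{q}\,\overline{\beta})$ separately; the pieces are reassembled via $|1-q^2|^2 = 1 - 2\re(q^2) + |q|^4$ and $\re(q^2)-|q|^2 = -2|\im(q)|^2$. You instead first observe that conjugation $\alpha\mapsto q\alpha q$ acts by the scalar $q^2$ on $\C_I$ and by $|q|^2$ on $\C_I J$, obtaining the diagonalized form $\alpha - q\alpha q = (1-q^2)\pi_I(\alpha) + (1-|q|^2)\pi_I^\perp(\alpha)$ before multiplying. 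This makes the computation shorter and more transparent: the cross terms visibly lie in $\C_I J$ and drop out, and the two surviving terms already carry the correct factors $|1-q^2|^2$ and $(1-|q|^2)^2$, so the algebraic identity $|1-q^2|^2 - (1-|q|^2)^2 = 4|\im(q)|^2$ (which the paper only invokes later, in the proof of Theorem~\ref{thm:G=ArcozziSarfattiwidetildeG}) finishes the job in one step. Both arguments are elementary, but yours isolates the structural reason for the formula rather than recovering it from a longer expansion.
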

\begin{proof}
	We will use in this proof the following easy to check fact
	\begin{equation}\label{eq:re(ab)=re(ba)}
		\re(\alpha \beta) = \re(\beta \alpha),
	\end{equation}
	for every $\alpha, \beta \in \HH$. In particular, the first expression is now a consequence of this identity and Theorem~\ref{thm:SGeomStruct} by the following computation
	\begin{align*}
		\re((1-q^2)^{-1}&(\alpha - q\alpha q)
			(\overline{\beta - q\beta q})
					(1-\overline{q}^2)^{-1}) = \\
				&= \re((1-\overline{q}^2)^{-1} 
					(1-q^2)^{-1}(\alpha - q\alpha q)
					(\overline{\beta - q\beta q})) \\
				&= |1 - q^2|^{-2} \re((\alpha - q\alpha q)
					(\overline{\beta - q\beta q})),
	\end{align*}
	for every $q \in \B$ and $\alpha, \beta \in \HH$.
	
	We also note that for $q \in \R$, the second and third expressions for $G_q$ are obvious consequences of the first one, which was just proved. Hence, we proceed to manipulate the expression 
	\[
		\re((\alpha - q \alpha q)
			(\overline{\beta - q \beta q}))
	\]
	under the assumption that $q \in \C_I\setminus \R$ for some $I \in \Sbb$. In particular, we can consider $\pi_I$ and $\pi_I^\perp$ and the decomposition that they entitle. We first compute as follows
	\begin{align*}
		\re((\alpha - q \alpha q)
			(\overline{\beta - q \beta q}))
			&= \re(\alpha \overline{\beta}) 
				+ \re(q\alpha q \overline{q}\overline{\beta}\overline{q})
				- \re(q\alpha q \overline{\beta})  
				- \re(\alpha \overline{q}\overline{\beta}\overline{q})  \\
			&= \re(\alpha \overline{\beta}) 
				+ |q|^2 \re(q\alpha \overline{\beta}\overline{q})
				- \re(q\alpha q \overline{\beta})  
				- \re(\overline{q} \alpha \overline{q}\overline{\beta})  \\
			&= \re(\alpha \overline{\beta}) 
				+ |q|^4 \re(\alpha \overline{\beta})
				- \re(q\alpha q \overline{\beta})  
				- \re(\overline{q} \alpha \overline{q}\overline{\beta})  \\
			&= (1 + |q|^4) \re(\alpha \overline{\beta})
			- \re(q\alpha q \overline{\beta})
			- \re(\overline{q} \alpha \overline{q} \overline{\beta}),
	\end{align*}
	where we have applied \eqref{eq:re(ab)=re(ba)} on the second identity (fourth term) and third identity (second term). Let us choose $J \in \Sbb$ that anti-commutes with $I$. If we write $\alpha = \alpha_1 + \alpha_2 J$ and $\beta = \beta_1 + \beta_2 J$,
	with $\alpha_1, \alpha_2, \beta_1, \beta_2 \in \C_I$, then a direct computation yields
	\begin{align*}
		\re(q\alpha q \overline{\beta})
			&= \re(q^2\alpha_1\overline{\beta_1})
				+ |q|^2\re(\alpha_2\overline{\beta_2}), \\
		\re(\overline{q} \alpha \overline{q} \overline{\beta}) 
			&= \re(\overline{q}^2\alpha_1\overline{\beta_1})
				+ |q|^2\re(\alpha_2\overline{\beta_2}).
	\end{align*}
	Hence, replacing in the previous expression we obtain
	\begin{align*}
		\re((\alpha - q \alpha q)&
			(\overline{\beta - q \beta q})) = \\
				&= (1 + |q|^4) \re(\alpha \overline{\beta})
					- 2\re(q^2) \re(\alpha_1\overline{\beta_1})
					- 2 |q|^2 \re(\alpha_2\overline{\beta_2}) \\
				&= \big(1 - 2 \re(q^2) + |q|^4\big) \re(\alpha \overline{\beta})
					+ 2\big(\re(q^2) - |q|^2\big)
						\re(\alpha_2\overline{\beta_2}) \\
				&= |1 - q^2|^2 \re(\alpha \overline{\beta})
					- 4 |\im(q)|^2 \re(\alpha_2\overline{\beta_2}),
	\end{align*}
	which yields the second formula for $G_q$ once we observe that 
	\[
		\alpha_2\overline{\beta_2} = - \alpha_2 J \beta_2 J 
		= - \pi_I^\perp(\alpha) \pi_I^\perp(\beta).
	\]
	The third formula now follows from Theorem~\ref{thm:GeometryMobiusNonRegular}(1).
	
	Finally, the formula for the length of a tangent vector with respect to the metric $G$ follows by taking $\alpha = \beta$ in the expressions already obtained.
\end{proof}

As a consequence, we conclude that the Riemannian geometry on the disk $\D_I = \B \cap \C_I$ induced from the slice Riemannian metric $G$ of $\B$ is exactly the usual hyperbolic geometry. More precisely, we have the next result. We will denote by $g_I$ the Riemannian metric on $\D_I$ induced by the canonical identification $\D_I \simeq \D$.

\begin{corollary}\label{cor:g-on-DI}
	For every $I \in \Sbb$, the slice Riemannian metric $G$ of $\B$ restricted to the tangent bundle $T\C_I$ of $\C_I$ is $g_I$. More precisely, we have 
	\[
		G_q(\alpha,\beta) = (g_I)_q(\alpha, \beta), 
	\]
	whenever $q \in \D_I$ and $\alpha, \beta \in \C_I$. In other words, the Riemannian metric on $\D_I$ induced by the slice Riemannian metric $G$ of $\B$ is the usual hyperbolic metric under the canonical identification $\D_I \simeq \D$.
\end{corollary}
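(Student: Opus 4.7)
The plan is to derive the statement as a direct consequence of Theorem~\ref{thm:SliceRiemannian} combined with the explicit description of the hyperbolic metric recalled in Proposition~\ref{prop:GeometryComplex}. I would fix $I \in \Sbb$ and take $q \in \D_I$ together with $\alpha, \beta \in \C_I \subset \HH$ regarded as tangent vectors to $\B$ at $q$ that happen to lie in $T_q\C_I$.

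First I would invoke the second expression for $G$ provided by Theorem~\ref{thm:SliceRiemannian}:
\[
G_q(\alpha,\beta) = \frac{\re(\alpha\overline{\beta})}{(1-|q|^2)^2} + 4\,\frac{|\im(q)|^2\,\re(\pi_I^\perp(\alpha)\pi_I^\perp(\beta))}{|1-q^2|^2(1-|q|^2)^2}.
\]
The key observation is that the orthogonal decomposition $\HH = \C_I \oplus \C_I J$ implies $\pi_I^\perp$ vanishes on $\C_I$. Hence, since $\alpha, \beta \in \C_I$, the second summand disappears identically and we are left with
\[
G_q(\alpha,\beta) = \frac{\re(\alpha\overline{\beta})}{(1-|q|^2)^2}.
\]

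Next I would match this with $g_I$. Under the isometric $\R$-algebra isomorphism $\C_I \simeq \C$ sending $I \mapsto i$, the quaternionic operations $q \mapsto \overline{q}$, $q \mapsto |q|$, and $\re$ restricted to $\C_I$ correspond to the standard complex conjugation, modulus, and real part on $\C$. Therefore the right-hand side above corresponds under the identification $\D_I \simeq \D$ to $\re(\alpha\overline{\beta})/(1-|z|^2)^2$, which is precisely the real part of the Hermitian metric from Proposition~\ref{prop:GeometryComplex}, i.e.\ the classical hyperbolic Riemannian metric on $\D$. Pulling back through the identification yields $(g_I)_q(\alpha,\beta)$.

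There is no substantive obstacle here; the only point to verify carefully is the algebraic claim that $\pi_I^\perp|_{\C_I} = 0$, which is immediate from the definition of $\pi_I^\perp$ as the orthogonal projection onto $(\C_I)^\perp = \C_I J$ with respect to the canonical inner product of $\HH \simeq \R^4$ (and the fact that this orthogonal complement is independent of the choice of $J$ anticommuting with $I$, as noted before Theorem~\ref{thm:SliceRiemannian}). Once this is observed, the corollary follows in a single line from the formula for $G$.
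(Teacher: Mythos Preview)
Your proposal is correct and matches the paper's approach exactly: the paper states the corollary without proof, treating it as an immediate consequence of the second formula in Theorem~\ref{thm:SliceRiemannian}, and your argument spells out precisely the intended one-line deduction (vanishing of $\pi_I^\perp$ on $\C_I$, then identification with the metric of Proposition~\ref{prop:GeometryComplex}).
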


As noted in our Introduction, a Riemannian metric on $\B$ has been constructed in \cite{ArcozziSarfatti} following a function space theoretic approach. This was achieved by considering the quaternionic Hardy space on $\B$. We will now restate some of the results from \cite[Theorem~1.1]{ArcozziSarfatti} within the setup of our notation. This will allow us to consider the Riemannian metric defined in \cite{ArcozziSarfatti} and compare it with our Riemannian metric $G$. For this, we will discuss some of the notions studied in \cite{ArcozziSarfatti}. We refer to the latter for further details and claims below on this matter.

Let us denote by $H^2(\B)$, the quaternionic Hardy space, which consists of functions defined by (convergent) power series of the form
\[
	\sum_{n=0}^\infty q^n a_n,
\]
where $(a_n)_n$ is an absolutely square summable quaternionic sequence. The space $H^2(\B)$ admits the positive definite quaternionic inner product given by
\[
	\bigg\langle
		\sum_{n=0}^\infty q^n a_n,
		\sum_{n=0}^\infty q^n b_n,
	\bigg\rangle_{H^2(\B)}
	= \sum_{n=0}^\infty \overline{b}_n a_n,
\]
for which $H^2(\B)$ turns out to be a quaternionic reproducing kernel Hilbert space. Its reproducing kernel is obtained from the following elements of $H^2(\B)$
\[
	k_q(w) = \sum_{n=0}^\infty w^n \overline{q}^n,
\]
where both $q, w \in \B$. Proceeding in a way similar to the complex case, \cite{ArcozziSarfatti} defines a pseudo-hyperbolic distance given by
\begin{equation}\label{eq:deltaArcozziSarfatti}
	\delta(p,q) =
	\Bigg(
		1 - 
		\bigg|
			\bigg\langle
				\frac{k_p}{\|k_p\|_{H^2(\B)}},
				\frac{k_q}{\|k_q\|_{H^2(\B)}}
			\bigg\rangle_{H^2(\B)}
		\bigg|^2
	\Bigg)^\frac{1}{2},
\end{equation}
for every $p,q \in \B$. As the following result states, it is proved in \cite{ArcozziSarfatti} that this distance comes from a Riemannian metric. The next result is in fact a subset of  \cite[Theorem~1.1]{ArcozziSarfatti} in the setup of our current notation.

\begin{theorem}[Arcozzi-Sarfatti~\cite{ArcozziSarfatti}]
	\label{thm:ArcozziSarfatti}
	The distance $\delta$ defined in \eqref{eq:deltaArcozziSarfatti} is induced by a Riemannian metric $\widetilde{G}$. Furthermore, for every $q \in \B$ and $\alpha \in T_q\B$, the norm of $\alpha$ with respect to $\widetilde{G}_q$ is given by
	\[
		|\alpha|_q^2  
		= \widetilde{G}_q(\alpha,\alpha) 
		= \frac{|\pi_I(\alpha)|^2}{(1 - |q|^2)^2}
			+ \frac{|\pi_I^\perp(\alpha)|^2}{|1 - q^2|^2}.
	\]
\end{theorem}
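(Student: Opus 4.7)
The plan is to extract the Riemannian metric $\widetilde{G}$ by expanding the pseudo-hyperbolic distance $\delta$ to second order at the diagonal. For fixed $q \in \B$ and $\alpha \in T_q\B \simeq \HH$, set $p(t) = q + t\alpha$ and
\[
	\Phi(t) = \frac{|\langle k_{p(t)}, k_q\rangle_{H^2(\B)}|^2}
		{\|k_{p(t)}\|_{H^2(\B)}^2\, \|k_q\|_{H^2(\B)}^2},
\]
so that $\delta(p(t),q)^2 = 1 - \Phi(t)$. Cauchy--Schwarz gives $\Phi(t) \le 1 = \Phi(0)$, so $\Phi'(0) = 0$, and the Riemannian metric induced by $\delta$ satisfies $\widetilde{G}_q(\alpha,\alpha) = -\tfrac{1}{2}\Phi''(0)$. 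The denominator is immediate: the geometric series gives $\|k_q\|^2 = (1-|q|^2)^{-1}$, and combined with $|p(t)|^2 = |q|^2 + 2t\re(q\overline{\alpha}) + t^2|\alpha|^2$ this yields the Taylor expansion of $\|k_{p(t)}\|^2 \|k_q\|^2$ through order $t^2$.

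To expand $\langle k_{p(t)}, k_q\rangle = \sum_{n \ge 0} q^n \overline{p(t)}^n$, I would fix $I \in \Sbb$ with $q \in \C_I$, pick $J \in \Sbb$ anticommuting with $I$, and decompose $\alpha = \alpha_1 + \alpha_2 J$ with $\alpha_1,\alpha_2 \in \C_I$. The non-commutative expansion of $(\overline{q} + t\overline{\alpha})^n$ is then simplified using the commutation rules $J\overline{c} = cJ$ and $Jc = \overline{c}J$, valid for every $c \in \C_I$. In the slice direction ($\alpha = \alpha_1$) everything commutes inside $\C_I$, the series collapses to $(1 - q\overline{p(t)})^{-1}$, and the classical complex argument from Section~\ref{sec:HermitianOnD} delivers $\widetilde{G}_q(\alpha_1,\alpha_1) = |\alpha_1|^2/(1-|q|^2)^2$.

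In the transverse direction ($\alpha = \alpha_2 J$), using the closed form $\sum_{k=0}^{n-1} \overline{q}^k q^{n-1-k} = (q^n - \overline{q}^n)/(q-\overline{q})$ available inside $\C_I$, one arrives at
\[
	\langle k_{p(t)}, k_q\rangle = \frac{1}{1-|q|^2}
		- \frac{t\,\alpha_2\, q}{(1-q^2)(1-|q|^2)}\, J
		- \frac{t^2\,|\alpha_2|^2\, q^2}{(1-q^2)(1-|q|^2)^2} + O(t^3),
\]
where, crucially, the $t^2$-coefficient lies in $\C_I$ rather than in $\C_I J$. Taking the squared modulus, the linear-in-$t$ pieces cancel and at order $t^2$ two non-trivial contributions survive: the square of the linear coefficient and the cross of the constant with the quadratic coefficient. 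Assembling these with the expansion of $\|k_{p(t)}\|^2 \|k_q\|^2$ and applying the identity $1 + 2\re\!\big(q^2/(1-q^2)\big) = (1-|q|^4)/|1-q^2|^2$ yields $\widetilde{G}_q(\alpha_2 J, \alpha_2 J) = |\alpha_2|^2/|1-q^2|^2$. The cross pairing $\widetilde{G}_q(\alpha_1, \alpha_2 J)$ vanishes because the first-order perturbations from the two directions lie in the orthogonal subspaces $\C_I$ and $\C_I J$ and so do not combine through the $t^2$-coefficient of the squared modulus. Summing the three contributions gives the stated formula and identifies $\widetilde{G}$ as a smooth Riemannian metric inducing $\delta$.

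The main obstacle is exactly this second-order bookkeeping in the transverse case: the correct coefficient $1/|1-q^2|^2$ is not $1/(1-|q|^2)^2$, and the discrepancy is precisely the $\re(q^2/(1-q^2))$ contribution produced by the quadratic-in-$t$ term of $\overline{p(t)}^n$, which any first-order-only analysis would miss.
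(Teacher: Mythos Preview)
The paper does not prove Theorem~\ref{thm:ArcozziSarfatti}; it is quoted verbatim as a result of Arcozzi--Sarfatti and no argument is given. What the paper does instead is derive the \emph{same formula} by an entirely different route: it defines the slice Riemannian metric $G$ via the differentials of regular M\"obius transformations (Definition~\ref{def:HermRiemSympRegularOnB}), computes $G$ explicitly in Theorem~\ref{thm:SliceRiemannian}, and then in Theorem~\ref{thm:G=ArcozziSarfattiwidetildeG} verifies algebraically that the resulting quadratic form coincides with the expression for $\widetilde{G}$ quoted from \cite{ArcozziSarfatti}. In particular, the paper never touches the kernel $k_q$, the inner product $\langle\cdot,\cdot\rangle_{H^2(\B)}$, or the second-order expansion of $\delta$; the link between $\delta$ and $\widetilde{G}$ is taken on faith from the cited reference.

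Your plan is therefore not a reconstruction of the paper's argument but rather a sketch of the original Arcozzi--Sarfatti computation. As such it is sound: the series manipulations you outline are correct, and in particular your order-$t^2$ coefficient $-|\alpha_2|^2 q^2\big/\big((1-q^2)(1-|q|^2)^2\big)$ and the identity $1+2\re\big(q^2/(1-q^2)\big)=(1-|q|^4)/|1-q^2|^2$ check out and combine to give $|\alpha_2|^2/|1-q^2|^2$ exactly as claimed. One point deserving more care is the vanishing of the cross term $\widetilde{G}_q(\alpha_1,\alpha_2 J)$: orthogonality of the first-order pieces in $\C_I$ and $\C_I J$ kills the product of the two linear coefficients in $|A(t)|^2$, but you must also check that the genuine $st$-coefficient of $A(s,t)$ (a mixed second derivative, which lands in $\C_I J$) contributes nothing when paired with the real constant $A_0$, and likewise that the $st$-coefficient of $\|k_{p(s,t)}\|^2$ vanishes because $\re(\alpha_1\overline{\alpha_2 J})=0$. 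Both hold, but the one-line justification you give does not quite cover them. Finally, your expansion yields the infinitesimal metric; the assertion that $\delta$ is globally ``induced by'' $\widetilde{G}$ (e.g.\ as a length metric) is a separate statement you have not addressed, and the paper does not address it either.
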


The next result proves that our metric $G$ and Arcozzi-Sarfatti's metric, denoted here by $\widetilde{G}$, turn out to be exactly the same.

\begin{theorem}\label{thm:G=ArcozziSarfattiwidetildeG}
	The slice Riemannian metric $G$ of $\B$ satisfies
	\[
		\|\alpha\|^2_q = 
		G_q(\alpha,\alpha) =
			\frac{|\pi_I(\alpha)|^2}{(1 - |q|^2)^2}
				+ \frac{|\pi_I^\perp(\alpha)|^2}{|1 - q^2|^2},
	\]
	for every $q \in \B$ and $\alpha \in T_q\B$. In particular, $G = \widetilde{G}$, where the latter is the Riemannian metric considered in Theorem~\ref{thm:ArcozziSarfatti}.
\end{theorem}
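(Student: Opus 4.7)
The plan is to reduce the identity to a short algebraic manipulation using the formula for $\|\alpha\|_q^2$ already established in Theorem~\ref{thm:SliceRiemannian}. Fix $I \in \Sbb$, and note that any $q \in \B$ lies in some $\C_I$, so there is no loss of generality in computing the quadratic form $\alpha \mapsto G_q(\alpha,\alpha)$ for this choice of $I$. Recall from Theorem~\ref{thm:SliceRiemannian} the expression
\[
\|\alpha\|_q^2 = \frac{|\alpha|^2}{(1-|q|^2)^2} + 4\,\frac{|\im(q)|^2\,\re(\pi_I^\perp(\alpha)^2)}{|1-q^2|^2(1-|q|^2)^2}.
\]
The goal is to rewrite this as $\dfrac{|\pi_I(\alpha)|^2}{(1-|q|^2)^2} + \dfrac{|\pi_I^\perp(\alpha)|^2}{|1-q^2|^2}$.

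The first step is to split $|\alpha|^2 = |\pi_I(\alpha)|^2 + |\pi_I^\perp(\alpha)|^2$ by orthogonality of the decomposition $\HH = \C_I \oplus \C_I J$. This isolates the $\pi_I(\alpha)$-term with the correct denominator $(1-|q|^2)^2$, leaving one to prove that
\[
\frac{|\pi_I^\perp(\alpha)|^2}{(1-|q|^2)^2} + 4\,\frac{|\im(q)|^2\,\re(\pi_I^\perp(\alpha)^2)}{|1-q^2|^2(1-|q|^2)^2} = \frac{|\pi_I^\perp(\alpha)|^2}{|1-q^2|^2}.
\]

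Next, I would observe that $\pi_I^\perp(\alpha)^2$ is in fact a negative real number. Indeed, writing $\pi_I^\perp(\alpha) = wJ$ with $w \in \C_I$, the identity $Jw = \overline{w}J$ (which holds because $J$ anti-commutes with $I$) gives
\[
\pi_I^\perp(\alpha)^2 = wJwJ = w\overline{w}J^2 = -|w|^2 = -|\pi_I^\perp(\alpha)|^2,
\]
so in particular $\re(\pi_I^\perp(\alpha)^2) = -|\pi_I^\perp(\alpha)|^2$. Substituting this in, the claim reduces to the purely numerical identity
\[
(1-|q|^2)^2 - |1-q^2|^2 = -4|\im(q)|^2,
\]
which is verified by writing $q = x + yI$ and expanding: $(1-x^2-y^2)^2$ and $(1-x^2+y^2)^2 + 4x^2y^2$ differ precisely by $-4y^2$. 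Multiplying through by $|1-q^2|^2(1-|q|^2)^2$ then matches both sides.

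Having established the formula for $\|\alpha\|_q^2$, the conclusion $G = \widetilde{G}$ is immediate from Theorem~\ref{thm:ArcozziSarfatti}: two Riemannian metrics on $\B$ that induce the same quadratic form on each tangent space coincide by polarization. No serious obstacle is expected; the only subtlety is the computation $\pi_I^\perp(\alpha)^2 = -|\pi_I^\perp(\alpha)|^2$, which neatly explains why the apparently disparate formulas in Theorem~\ref{thm:SliceRiemannian} and Theorem~\ref{thm:ArcozziSarfatti} agree.
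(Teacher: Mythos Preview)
Your proposal is correct and follows essentially the same route as the paper's proof: both start from the length formula in Theorem~\ref{thm:SliceRiemannian}, use the key observation that $\pi_I^\perp(\alpha)^2 = -|\pi_I^\perp(\alpha)|^2$, reduce to the numerical identity $|1-q^2|^2 - 4|\im(q)|^2 = (1-|q|^2)^2$ verified via $q = x + yI$, and then invoke polarization to conclude $G = \widetilde{G}$.
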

\begin{proof}
	Once the identity in the statement has been proved, a polarization argument implies that $G = \widetilde{G}$. Hence, we proceed to establish the aforementioned identity.
	
	Let us fix $q \in \B$ and $\alpha \in T_q \B$. Note that each tangent space of $\B$ is canonically identified with $\HH$, and so we consider $\alpha \in \HH$. Let us assume that $q \in \C_I$, where $I \in \Sbb$, and let us choose $J \in \Sbb$ which anti-commutes with $I$. Thus, we can consider the decomposition
	\[
		\alpha = \alpha_1 + \alpha_2 J,
	\]
	where $\alpha_1, \alpha_2 \in \C_I$. Hence, $|\alpha_1|^2 = |\pi_I(\alpha)|^2$ and we also have
	\[
		|\alpha_2|^2 = -\pi_I^\perp(\alpha)^2, \quad
		|\alpha_2|^2 = |\pi_I^\perp(\alpha)|^2,
	\]
	where the first identity follows from the discussion at the end of the proof of Theorem~\ref{thm:SliceRiemannian}, and the second is a consequence of the first. Hence, by using these identities in the last formula for $G_q$ in Theorem~\ref{thm:SliceRiemannian} we obtain
	\begin{align*}
		\|\alpha\|^2_q =
		G_q(\alpha,\alpha) 
			&= \frac{|\alpha_1|^2 + |\alpha_2|^2}{(1 - |q|^2)^2}
				-4 \frac{|\im(q)|^2 |\alpha_2|^2}%
					{|1 - q^2|^2 (1 - |q|^2)^2} \\
			&= \frac{|\pi_I(\alpha)|^2 + |\pi_I^\perp(\alpha)|^2}%
							{(1 - |q|^2)^2}
				-4 \frac{|\im(q)|^2 |\pi_I^\perp(\alpha)|^2}%
					{|1 - q^2|^2 (1 - |q|^2)^2}  \\
			&= \frac{|\pi_I(\alpha)|^2}{(1 - |q|^2)^2}
				+ 	\bigg
					(\frac{|1 - q^2|^2 - 4 |\im(q)|^2}{(1 - |q|^2)^2}
					\bigg)
					\frac{|\pi_I^\perp(\alpha)|^2}{|1 - q^2|^2}.
	\end{align*}
	Hence, it is enough to prove that the quantity in parentheses is identically $1$ for every $q \in \B$. This is a simple computation that we show for the sake of completeness. Recall that $q \in \C_I$, so we write $q = x + y I$ with $x, y \in \R$. Then, we have
	\begin{align*}
		|1 - q^2|^2 - 4 |\im(q)|^2 
			&= |1 - (x^2 - y^2 + 2xyI)|^2 - 4y^2 \\
			&= (1 - x^2 + y^2)^2 + 4x^2y^2 - 4y^2 \\
			&= 1 + x^4 + y^4 - 2x^2 + 2y^2 - 2x^2y^2 
					+ 4x^2y^2 - 4y^2 \\
			&= 1 + x^4 + y^4 - 2x^2 - 2y^2 + 2x^2y^2 \\
			&= (1 - x^2 - y^2)^2 \\
			&= (1 - |q|^2)^2,
	\end{align*}
	which thus complete the proof.
\end{proof}

\begin{remark}\label{rmk:G=ArcozziSarfattiwidetildeG}
	A number of properties for the slice Riemannian metric $G$ can be derived from Theorem~\ref{thm:G=ArcozziSarfattiwidetildeG} thanks to the developments carried out in \cite{ArcozziSarfatti}. For the slice Riemannian metric $G = \widetilde{G}$, the latter computes the isometry group, the Lipschitz functions, geodesics and induced volume form on the boundary of $\B$, as well as their relationship with regular functions.
	
	On the other hand, it is important to emphasize the difference in techniques used to obtain $G = \widetilde{G}$ in \cite{ArcozziSarfatti} as opposed to our approach. The construction of this metric in \cite{ArcozziSarfatti} follows a function theoretic approach that mirrors the pseudo-hyperbolic distance in the complex unit disk as expressed in terms of a reproducing kernel. Our approach mirrors the complex case as well, which was described in section~\ref{sec:HermitianOnD}, but we do so from the viewpoint of transformations as geometric objects. Even though the quaternionic case of $\B$ does not entitle a group from regular M\"obius transformations, we have been able to extract enough group-like properties to construct the metric $G$ by looking for an invariant geometric tensor. One advantage of our method is the simplicity, in terms of computations needed, to obtain the slice Riemannian metric.
	
	At any rate, Theorem~\ref{thm:G=ArcozziSarfattiwidetildeG} ensures that both the function space theory and geometric/group-like approaches to construct a Riemannian metric lead to the same solution: the slice Riemannian metric $G$.
\end{remark}

We now prove that the slice Riemannian metric of $\B$ can be determined by its value at a single slice. This will make use of the non-regular M\"obius transformation $C_u$ defined by $C_u(q) = u^{-1} q u$, where $u \in \Spe(1)$.

\begin{corollary}[Riemannian representation formula]
	\label{cor:RiemRepFormula}
	The slice Riemannian metric is $C_u$-invariant for every $u \in \Spe(1)$. In particular, for any fixed $I_0 \in \Sbb$, the restriction of $G$ to the points of the slice $\D_{I_0}$ of $\B$ determines $G$ everywhere in the following sense. For every $I \in \Sbb$ and $u \in \Spe(1)$ such that $I = u^{-1} I_0 u$, we have
	\[
		G_q(\alpha, \beta) 
			= G_{uqu^{-1}} (u\alpha u^{-1},u\beta u^{-1}),
	\]
	for every $q \in \D_I$ and $\alpha, \beta \in \HH$, where we note that $u q u^{-1} \in \D_{I_0}$.
\end{corollary}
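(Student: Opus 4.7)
The plan is to first establish the $C_u$-invariance of $G$ by direct computation using the explicit formula from Theorem~\ref{thm:SliceRiemannian}, and then deduce the representation formula as an easy consequence via the transitive conjugation action of $\Spe(1)$ on $\Sbb$.

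For the invariance, fix $u \in \Spe(1)$ and denote by $\phi_u(\xi) = u^{-1}\xi u$ the linear action of $u$ by conjugation, which coincides with the differential of $C_u$ at every point (as $C_u$ is $\R$-linear). The key algebraic features of $\phi_u$ are that it is a unital $\R$-algebra automorphism of $\HH$ (so it is multiplicative and preserves $1$), that it preserves the norm (since $u^{-1} = \overline{u}$), and that it commutes with quaternionic conjugation, i.e.\ $\overline{\phi_u(\xi)} = \phi_u(\overline{\xi})$. From these properties one checks by inspection that if $q' = C_u(q) = \phi_u(q)$ and $\alpha' = \phi_u(\alpha)$, $\beta' = \phi_u(\beta)$, then
\[
    |q'|^2 = |q|^2, \qquad 1 - (q')^2 = \phi_u(1 - q^2), \qquad |1-(q')^2|^2 = |1-q^2|^2,
\]
and moreover $\alpha' - q'\alpha' q' = \phi_u(\alpha - q\alpha q)$ with the analogous identity for $\beta$. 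Combining these with $\overline{\phi_u(\cdot)} = \phi_u(\overline{\cdot})$ gives
\[
    (\alpha' - q'\alpha' q')\,\overline{(\beta' - q'\beta' q')} = \phi_u\bigl((\alpha - q\alpha q)\,\overline{(\beta - q\beta q)}\bigr),
\]
and since $\re \circ \phi_u = \re$ (conjugation preserves the real part), the numerator of the formula for $G$ is unchanged. Plugging into Theorem~\ref{thm:SliceRiemannian} yields $G_{C_u(q)}(dC_u(\alpha), dC_u(\beta)) = G_q(\alpha,\beta)$, i.e.\ $C_u$-invariance.

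For the representation formula, recall that $\Spe(1)$ acts transitively on $\Sbb$ by conjugation, so the hypothesis that $I = u^{-1} I_0 u$ can always be realized. Given $q \in \D_I$, write $q = x + yI$ with $x,y \in \R$; then
\[
    uqu^{-1} = x + y(uIu^{-1}) = x + y I_0 \in \D_{I_0},
\]
which verifies the parenthetical claim. Applying the invariance just proved to $u^{-1} \in \Spe(1)$ (so that $C_{u^{-1}}(q) = uqu^{-1}$ and $dC_{u^{-1}}(\alpha) = u\alpha u^{-1}$) yields the stated identity
\[
    G_q(\alpha,\beta) = G_{uqu^{-1}}\bigl(u\alpha u^{-1}, u\beta u^{-1}\bigr).
\]

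No real obstacle is expected: the whole argument reduces to tracking the conjugation $\phi_u$ through the closed-form expression for $G$, exploiting that $\phi_u$ is an isometric algebra automorphism commuting with $\re$ and with quaternionic conjugation. The only minor care point is to remember that $u^{-1} = \overline{u}$ for $u \in \Spe(1)$, which is what makes $\phi_u$ preserve norms and conjugation.
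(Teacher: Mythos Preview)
Your proof is correct and follows essentially the same approach as the paper: you use the first explicit formula for $G$ from Theorem~\ref{thm:SliceRiemannian} and check that conjugation by $u \in \Spe(1)$ preserves each ingredient (norm, real part, the expression $\alpha - q\alpha q$), then obtain the representation formula as the $C_{u^{-1}}$-invariance. Your write-up is somewhat more explicit than the paper's in tracking the algebra automorphism $\phi_u$ through the numerator, but the underlying argument is the same.
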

\begin{proof}
	Since $C_u$ is linear for every $u \in \Spe(1)$, it coincides with its differential. We also note that the following identities hold
	\begin{align*}
		\re(u \alpha u^{-1} \overline{u \beta u^{-1}})
			&= \re(\alpha \overline{\beta}) \\
		|1 - (uqu^{-1})^2| &= |1 - q^2| \\
		|uqu^{-1}| &= |q|,
	\end{align*}
	for every $q \in \B$, $\alpha, \beta \in \HH$ and $u \in \Spe(1)$. These remarks and the first formula for $G$ in Theorem~\ref{thm:SliceRiemannian} imply the claimed invariance. From this, the representation formula is just a restatement of the $C_{u^{-1}}$-invariance.
\end{proof}

The slice 2-form $\Omega$ has properties similar to those of $G$ that relates it to the classical complex case. As with the Riemannian case, we will denote by $\omega_I$ the K\"ahler form on $\D_I$ induced by the canonical identification $\D_I \simeq \D$.

\begin{theorem}\label{thm:SliceKahler}
	The slice 2-form $\Omega$ of $\B$ has the expression
	\[
		\Omega_q(\alpha, \beta) = 
		\frac{\im\big((1-q^2)^{-1}(\alpha - q\alpha q)%
			(\overline{\beta - q\beta q})%
				(1-\overline{q}^2)^{-1}\big)}{(1 - |q|^2)^2},
	\]
	for every $q \in \B$ and $\alpha, \beta \in \HH$. In particular, $\Omega$ satisfies the following properties.
	\begin{enumerate}
		\item (Anti-symmetry) $\Omega$ is an $\im(\HH)$-valued $2$-form of $\B$. More precisely, for every $q \in \B$, we have $\Omega_q(\alpha, \beta) = - \Omega_q(\beta, \alpha)$, for all $\alpha, \beta \in \HH$.
		\item (Non-degeneracy) $\Omega$ is non-degenerate. More precisely, for every $q \in \B$, if some $\alpha \in \HH$ satisfies $\Omega_q(\alpha, \beta) = 0$ for all $\beta \in \HH$, then $\alpha = 0$.
		\item (Slice closedness) For every $I \in \Sbb$, let us denote by $\Omega_I$ the restriction of $\Omega$ to the tangent bundle $T \D_I$ of $\D_I$. Then, $\Omega_I$ is an $I \R$-valued closed $2$-form. Furthermore, we have $\Omega_I = I \omega_I$.
	\end{enumerate}
\end{theorem}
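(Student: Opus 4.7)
The plan is to deduce everything from Theorem~\ref{thm:SGeomStruct} together with the commutativity of each slice $\C_I$. First, the explicit formula for $\Omega$ is immediate: by Definition~\ref{def:HermRiemSympRegularOnB} we have $\Omega_q = \im \circ H_q$, and since the scalar denominator $(1 - |q|^2)^2$ is real, applying $\im$ to the formula for $H_q$ in Theorem~\ref{thm:SGeomStruct} yields the stated expression. The claim that $\Omega$ is $\im(\HH)$-valued is then automatic. Anti-symmetry (part (1)) follows by applying $\im$ to the Hermitian symmetry $H_q(\alpha,\beta) = \overline{H_q(\beta,\alpha)}$ from Theorem~\ref{thm:SGeomStruct}(2), which immediately gives $\Omega_q(\alpha,\beta) = -\Omega_q(\beta,\alpha)$.

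For non-degeneracy (part (2)), I would argue that, since $\cF_q$ is a diffeomorphism by Proposition~\ref{prop:FaRegularMobius}(2), the $\R$-linear map $\dif\:(\cF_q)_q$ is an isomorphism of $\HH$. Setting $\gamma = \dif\:(\cF_q)_q(\alpha)$ and $\delta = \dif\:(\cF_q)_q(\beta)$, the condition $\Omega_q(\alpha,\beta) = 0$ for every $\beta$ becomes $\im(\gamma \overline{\delta}) = 0$ for every $\delta \in \HH$. It suffices to show this forces $\gamma = 0$: if $\gamma \notin \R$ take $\delta = 1$ to obtain $\im(\gamma) \neq 0$, while if $\gamma \in \R \setminus \{0\}$ take $\delta = I$ for any imaginary unit to get $\gamma \overline{\delta} = -\gamma I \in \im(\HH)\setminus\{0\}$, a contradiction in either case.

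The main work is the slice closedness in part (3). For $q \in \D_I$ and $\alpha, \beta \in \C_I$, every factor appearing in the formula for $\Omega_q(\alpha,\beta)$ lies in the commutative subfield $\C_I$: in particular $q\alpha q = \alpha q^2$ and $q\beta q = \beta q^2$, so $\alpha - q\alpha q = \alpha(1 - q^2)$ and $\beta - q\beta q = \beta(1 - q^2)$. Substituting into the formula for $\Omega$, the factors $(1 - q^2)^{-1}$ and $(1 - \overline{q}^2)^{-1}$ cancel against $(1 - q^2)$ and $(1 - \overline{q}^2)$ respectively, reducing the expression to
\begin{equation*}
	\Omega_q(\alpha, \beta) = \frac{\im(\alpha \overline{\beta})}{(1 - |q|^2)^2},
\end{equation*}
which takes values in $I\R$ since $\alpha \overline{\beta} \in \C_I$. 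Under the identification $\D_I \simeq \D$ sending $x + yI \mapsto x + yi$, the real coefficient of $I$ in $\alpha\overline{\beta}$ corresponds exactly to the usual complex imaginary part of the image product; comparing with Proposition~\ref{prop:GeometryComplex} this shows $(\Omega_I)_q = I \,(\omega_I)_q$. Closedness of $\Omega_I$ then follows at once, since $\omega_I$ is closed and $I$ is a constant scalar, so $\dif(I \omega_I) = I \dif \omega_I = 0$.

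The step I expect to require the most care is the bookkeeping in the last part: tracking the difference between the quaternionic $\im$, which produces an element of $I\R$, and the real-valued classical $\mathrm{Im}$ used in Proposition~\ref{prop:GeometryComplex}, so that the identification $\D_I \simeq \D$ correctly translates $\Omega_I$ into $I$ times the classical K\"ahler form. Beyond that point everything reduces to the complex case.
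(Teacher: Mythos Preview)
Your proposal is correct and follows essentially the same route as the paper: the formula and anti-symmetry come from Theorem~\ref{thm:SGeomStruct}, non-degeneracy is reduced via the isomorphism $\dif(\cF_q)_q$ to the non-degeneracy of $(\gamma,\delta)\mapsto\im(\gamma\overline{\delta})$, and the slice computation collapses to $\im(\alpha\overline{\beta})/(1-|q|^2)^2 = I(\omega_I)_q$. The only differences are cosmetic---you spell out the cancellation in $\C_I$ and the non-degeneracy of $\im(\gamma\overline{\delta})$ more explicitly than the paper does, and you derive anti-symmetry from the Hermitian symmetry of $H$ rather than directly from $\im(\alpha\overline{\beta})=-\im(\beta\overline{\alpha})$, which amounts to the same thing.
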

\begin{proof}
	The expression for $\Omega_q$ in the statement is a direct consequence of the definition of $\Omega$ and Theorem~\ref{thm:SGeomStruct}.
	
	The anti-symmetry of $\Omega$ follows immediately from the property $\im(\alpha \overline{\beta}) = - \im(\beta \overline{\alpha})$ for $\alpha, \beta \in \HH$. On the other hand, the non-degeneracy of $\Omega$ follows from Definition~\ref{def:HermRiemSympRegularOnB}, the fact that $\dif\:(\cF_q)_q$ is an isomorphism (see Proposition~\ref{prop:FaRegularMobius}) and the non-degeneracy of the bilinear form $(\alpha, \beta) \mapsto \im(\alpha \overline{\beta})$.
	
	Finally, for a given $I \in \Sbb$, the restriction of $\Omega$ to the tangent bundle $T\D_I$ is obtained by considering the values 
	\[
		(\Omega_I)_q(\alpha, \beta) = \Omega_q(\alpha, \beta),
	\]
	only for $q \in \D_I$ and $\alpha, \beta \in \C_I$. In this case, we obtain from the above expression that
	\[
		(\Omega_I)_q(\alpha, \beta) 
			= \frac{\im(\alpha \overline{\beta})}{(1 - |q|^2)^2}.
	\]
	The latter expression is precisely $I$-times the K\"ahler form $\omega_I$ of the unit disk $\D_I$. In symbols we have $\Omega_I = I \omega_I$, and so	the slice closedness of $\Omega$, as stated in (3), follows from the closedness of $\omega_I$. 
\end{proof}

The previous result provides the basis for the next definition.

\begin{definition}\label{def:SliceKahler}
	The $\im(\HH)$-valued $2$-form $\Omega$ from Definition~\ref{def:HermRiemSympRegularOnB} will be called the slice K\"ahler form of $\B$.
\end{definition}

\begin{remark}\label{rmk:SliceKahler}
	To be absolutely clear about our notation above, recall that $\im$ stands for two different functions depending on whether it is applied on $\HH$ or $\C$. For the former, $\im$ takes values in the space generated by $\Sbb$, while for the latter $\im$ is $\R$-valued. Hence, $\Omega$ is vector-valued, but $\omega$ and $\omega_I$ are real-valued. This explains the need for the factor $I$ in the formula $\Omega_I = I \omega_I$ in Theorem~\ref{thm:SliceKahler}.
\end{remark}

As in the Riemannian case given by Corollary~\ref{cor:RiemRepFormula}, the value of the slice Hermitian metric and the slice K\"ahler form of $\B$ are completely determined by their values at a single slice. Note that the vector-valued nature of these tensors introduce an additional conjugation. In other words, we obtain equivariant geometric structures instead of invariant ones for the maps $C_u$, where $u \in \Spe(1)$.

\begin{corollary}[Slice Hermitian and slice K\"ahler representation formulas]
\label{cor:HermKahlerRepFormulas}	
	The slice Hermitian metric $H$ and the slice K\"ahler form $\Omega$ of $\B$ are $C_u$-equivariant for every $u \in \Spe(1)$. More precisely, we have 
	\begin{align*}
		H_q(\alpha, \beta) &= 
			u^{-1}\big( 
				H_{uqu^{-1}}(u\alpha u^{-1}, u\beta u^{-1}) 
			\big)u, \\
		\Omega_q(\alpha, \beta) &= 
			u^{-1}\big( 
				\Omega_{uqu^{-1}}(u\alpha u^{-1}, u\beta u^{-1}) 
			\big)u,
	\end{align*}
	for every $u \in \Spe(1)$, $q \in \B$ and $\alpha, \beta \in \HH$. In particular, the values of $H$ and $\Omega$ at the points of any slice can be recovered from the values at the points of a given single slice using the formulas above.
\end{corollary}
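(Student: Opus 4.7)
The plan is to derive the equivariance directly from the explicit formula for $H_q$ established in Theorem~\ref{thm:SGeomStruct}, and then extract the statement for $\Omega$ as an immediate corollary. The core of the argument is a bookkeeping of how the conjugation $x \mapsto uxu^{-1}$ interacts with the ingredients $q^2$, $|q|^2$, $\overline{q}$, and the expression $\alpha - q\alpha q$. Throughout I use that $u \in \Spe(1)$ implies $\overline{u} = u^{-1}$, which in turn gives $\overline{uxu^{-1}} = u\overline{x}u^{-1}$ for every $x \in \HH$, together with the ring-theoretic fact that conjugation by $u$ is an $\R$-algebra automorphism of $\HH$.

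First I would record the elementary identities that carry all the weight:
\begin{align*}
	(uqu^{-1})^n &= uq^nu^{-1}, &
	\overline{uqu^{-1}} &= u\overline{q}u^{-1}, &
	|uqu^{-1}| &= |q|, \\
	1 - (uqu^{-1})^2 &= u(1 - q^2)u^{-1}, &
	1 - \overline{uqu^{-1}}^{\,2} &= u(1 - \overline{q}^2)u^{-1},
\end{align*}
and, applying conjugation by $u$ to the identity $\alpha - q\alpha q$ term-by-term,
\[
	(u\alpha u^{-1}) - (uqu^{-1})(u\alpha u^{-1})(uqu^{-1}) = u(\alpha - q\alpha q)u^{-1},
\]
with the analogous statement for $\beta$. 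Inversion commutes with conjugation as well, so $(u(1-q^2)u^{-1})^{-1} = u(1-q^2)^{-1}u^{-1}$.

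Next I would substitute $(q, \alpha, \beta) \mapsto (uqu^{-1}, u\alpha u^{-1}, u\beta u^{-1})$ into the formula
\[
	H_q(\alpha,\beta) = \frac{(1-q^2)^{-1}(\alpha - q\alpha q)\overline{(\beta - q\beta q)}(1-\overline{q}^2)^{-1}}{(1 - |q|^2)^2}
\]
from Theorem~\ref{thm:SGeomStruct}. The denominator is $C_u$-invariant by the norm identity above. In the numerator, each of the four factors acquires a conjugation by $u$, and the inner $u^{-1}\cdot u$ pairs telescope, yielding
\[
	H_{uqu^{-1}}(u\alpha u^{-1}, u\beta u^{-1}) = u\, H_q(\alpha,\beta)\, u^{-1},
\]
which is exactly the equivariance asserted for $H$ after multiplying by $u^{-1}$ on the left and $u$ on the right. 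Passing to $\Omega$ requires only the observation that conjugation by $u$ preserves the real/imaginary splitting of $\HH$ (since $\re(x) \in \R$ is central), so $\im(uxu^{-1}) = u\,\im(x)\,u^{-1}$; applying $\im$ to both sides of the $H$-equivariance yields the corresponding identity for $\Omega$. The representation claim then follows because, given any slice $\D_I$ and any $u \in \Spe(1)$ with $uIu^{-1} = I_0$, the map $q \mapsto uqu^{-1}$ sends $\D_I$ diffeomorphically onto $\D_{I_0}$, so the equivariance recovers $H$ and $\Omega$ on $\D_I$ from their values on $\D_{I_0}$.

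There is essentially no hard step; the only point that requires care is the conjugation rule $\overline{uxu^{-1}} = u\overline{x}u^{-1}$, which is exactly where the hypothesis $u \in \Spe(1)$ (and not a general unit, which would fail in the non-commutative setting unless $\overline{u} = u^{-1}$) is used. Once that identity is in hand, the proof reduces to telescoping four conjugations inside the product defining $H_q$, and the appearance of the outer conjugation $u(\cdot)u^{-1}$ on the right-hand side is the algebraic signature of $H$ and $\Omega$ being vector-valued rather than scalar-valued, which explains why one gets an equivariance formula here where the Riemannian case of Corollary~\ref{cor:RiemRepFormula} gave outright invariance.
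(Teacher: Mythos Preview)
Your proof is correct and follows essentially the same approach as the paper, which simply states that the equivariance formulas are obtained by substitution into the explicit expressions for $H$ and $\Omega$ from Theorems~\ref{thm:SGeomStruct} and~\ref{thm:SliceKahler}. You supply the detailed bookkeeping that the paper leaves to the reader, and your derivation of the $\Omega$-identity by applying $\im$ to the $H$-identity is equivalent to substituting directly into the formula of Theorem~\ref{thm:SliceKahler}, since that formula is precisely the imaginary part of the one in Theorem~\ref{thm:SGeomStruct}.
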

\begin{proof}
	The equivariance formulas stated are easily obtained by substitution on the expressions for $H$ and $\Omega$ given in Theorems~\ref{thm:SGeomStruct} and \ref{thm:SliceKahler}, respectively.
\end{proof}

\begin{remark}[A comparison with other quaternionic-like structures]
\label{rmk:ComparisonHyperKahler}
	With the previous results at hand, we compare the $\Spe(1,1)$-invariant metric $\widehat{G}$ and a couple of well known geometric quaternionic structures with our slice geometric ones. For simplicity, we will discuss the $4$-dimensional case which is clearly enough for our purposes.
	
	In the first place, we have the notion of a hyper-K\"ahler structure. On a $4$-dimensional manifold, such structure is given by a Riemannian metric and three integrable complex structures orthogonal for the metric and satisfying the usual quaternionic commutation relations. Further details on their definition and properties can be found in \cite{BesseEinstein}. As stated in Theorem~14.13 of the latter reference, Berger proved that every hyper-K\"ahkler manifold is Ricci flat. However, the metric $\widehat{G}$ is known to have strictly negative sectional curvature (see \cite{Helgason}) and so strictly negative Ricci curvature. Hence, from Theorem~\ref{thm:GeometryMobiusNonRegular} it follows that there is no hyper-K\"ahler structure on $\B$ that is invariant under the $\Spe(1,1)$-action by non-regular M\"obius transformations. 
	
	In second place, there is the notion of quaternion-K\"ahler manifold. We refer again to \cite{BesseEinstein} for further details. One possible definition, in the $4$-dimensional case, is obtained by requiring the given Riemannian manifold to have holonomy $\Spe(1) \times \Spe(1)$. This is in fact the case for the metric $\widehat{G}$, since the holonomy and the isotropy coincide for Riemannian symmetric spaces (see \cite{Helgason}). Hence, $\B$ with the metric $\widehat{G}$ is quaternion-K\"ahler. It is also known, as discussed in section~D from Chapter~14 of \cite{BesseEinstein} and in page~92 of \cite{HarveySpinors}, that every quaternion-K\"ahler manifold carries three locally defined almost-complex structures and three locally defined $2$-forms. The latter allow to construct a globally defined $4$-form. However, in the case of $\B$ with the metric $\widehat{G}$ the locally defined objects cannot be extended globally. In particular, the quaternion-K\"ahler structure obtained from the $\Spe(1,1)$-action does not yield associated globally defined $2$-forms.
	
	In conclusion, the $\Spe(1,1)$-action by non-regular M\"obius transformations fails to provide a notion of K\"ahler structure on $\B$ as some sort of $2$-form involving $\HH$ that could also be nicely related to a corresponding Riemannian metric.
	
	However, the slice regular K\"ahler structure $\Omega$ does provide a natural $2$-form with the K\"ahler-like properties obtained in Theorem~\ref{thm:SliceKahler}. Furthermore, by construction, $\Omega$ exhibits as much invariance under $\cM(\B)$ as the latter allows considering that it is not a group. Since, $\Omega$ is $\im(\HH)$-valued we can always consider the components with respect to an orthonormal basis of $\im(\HH)$ to obtain three real-valued $2$-forms. The slice Hermitian metric $H$ and the slice Riemannian metric $G$ have similar properties as stated in Theorems~\ref{thm:SGeomStruct} and \ref{thm:SliceRiemannian}, respectively, and their corollaries. More precisely, they both exhibit as much invariance as $\cM(\B)$ allows.
	
	Also, a certain degree of invariance for all three slice geometric structures has been obtained in Corollaries~\ref{cor:RiemRepFormula} and \ref{cor:HermKahlerRepFormulas}. The latter have provided the sort of Representation Formulas found in the theory of slice regular functions, but now for geometric structures on $\B$ built from regular M\"obius transformations.
	
	Finally, the slice Riemannian metric $G$ and the slice K\"ahler form $\Omega$ can be considered nicely related in a similar way as it occurs in the classical complex case. The reason is that $G$ and $\Omega$ are the (quaternionic) real and imaginary parts, respectively, of the slice Hermitian metric $H$. With all three of them built out of the same principle: regular M\"obius transformations.
\end{remark}

\subsection*{Acknowledgment}
This research was partially supported by SNI-Conahcyt and by Conahcyt Grants 280732 and 61517.

\subsection*{Data Availability} 
Data sharing not applicable to this article as no datasets were generated or analyzed during the current study.

\subsection*{Competing interests} The author has no competing interests to declare that are relevant to the content of this article.


\begin{thebibliography}{XX}

\bibitem{ArcozziSarfatti} Arcozzi, Nicola and Sarfatti, Giulia:
\emph{Invariant metrics for the quaternionic Hardy space}. J. Geom. Anal. 25 (2015), no.3, 2028--2059.

\bibitem{BesseEinstein} Besse, Arthur L.: Einstein manifolds.
Reprint of the 1987 edition Classics Math. Springer-Verlag, Berlin, 2008.

\bibitem{BisiGentiliMobius} Bisi, Cinzia and Gentili, Graziano: \emph{M\"obius transformations and the Poincar\'e distance in the quaternionic setting}. Indiana Univ. Math. J. 58 (2009), no.6, 2729--2764.

\bibitem{BisiGentiliGeometryHUnitDisc} Bisi, Cinzia and Gentili, Graziano: \emph{On the geometry of the quaternionic unit disc}. Hypercomplex analysis and applications, 1–11. Trends Math. Birkh\"auser/Springer Basel AG, Basel, 2011.

\bibitem{BisiStoppatoSchwarz} Bisi, Cinzia and Stoppato, Caterina:
\emph{The Schwarz-Pick lemma for slice regular functions}. Indiana Univ. Math. J. 61 (2012), no.1, 297–-317.

\bibitem{BisiStoppatoMobius} Bisi, Cinzia and Stoppato, Caterina: \emph{Regular vs.~classical M\"obius transformations of the quaternionic unit ball}. Advances in hypercomplex analysis, 1–13.
Springer INdAM Ser., 1 Springer, Milan, 2013.

\bibitem{CaoParkerWang} Cao, Wensheng; Parker, John R. and Wang, Xiantao: \emph{On the classification of quaternionic M\"obius transformations}. Math. Proc. Cambridge Philos. Soc. 137 (2004), no.2, 349--361.

\bibitem{ChenGreenberg} Chen, S. S. and Greenberg, L.: \emph{Hyperbolic spaces}. Contributions to analysis (a collection of papers dedicated to Lipman Bers), pp. 49--87 Academic Press [Harcourt Brace Jovanovich, Publishers], New York-London, 1974

\bibitem{ColomboSabadiniEtAlSliceMonogenic} Colombo, Fabrizio; Kimsey, David P.; Pinton, Stefano and  Sabadini, Irene: \emph{Slice monogenic functions of a Clifford variable via the S-functional calculus}.
Proc. Amer. Math. Soc. Ser. B 8 (2021), 281--296.

\bibitem{ColomboKraussharSabadiniSymmetries} Colombo, Fabrizio; Krau{\ss}har, Rolf S\"oren and Sabadini, Irene: \emph{Symmetries of slice monogenic functions}. J. Noncommut. Geom. 14 (2020), no.3, 1075--1106.

\bibitem{ColomboSabadiniStruppaFunctionalBook} Colombo, Fabrizio; Sabadini, Irene and Struppa, Daniele C.: Noncommutative functional calculus.
Theory and applications of slice hyperholomorphic functions. Progr. Math., 289
Birkh\"auser/Springer Basel AG, Basel, 2011.

\bibitem{GentiliStoppatoStruppa2ndEd} Gentili, Graziano; Stoppato, Caterina and Struppa, Daniele C.:Regular functions of a quaternionic variable. Second edition. Springer Monogr. Math. Springer, Cham, 2022.

\bibitem{GentiliStruppa2007} Gentili, Graziano and Struppa, Daniele C.: \emph{A new theory of regular functions of a quaternionic variable.} Adv. Math. 216 (2007), no.1, 279--301.

\bibitem{HarveySpinors} Harvey, F. Reese: Spinors and calibrations. Perspect. Math., 9 Academic Press, Inc., Boston, MA, 1990.

\bibitem{Helgason} Helgason, Sigurdur: Differential geometry, Lie groups, and symmetric spaces. Corrected reprint of the 1978 original. Grad. Stud. Math., 34 American Mathematical Society, Providence, RI, 2001.

\bibitem{StoppatoMobius} Stoppato, Caterina: \emph{Regular Moebius transformations of the space of quaternions}. Ann. Global Anal. Geom. 39 (2011), no.4, 387--401.

\end{thebibliography}
\end{document}